\newtheorem{theorem}{Theorem}
\newtheorem{lemma}[theorem]{Lemma}
\newtheorem{corollary}[theorem]{Corollary}
\newtheorem{proposition}[theorem]{Proposition}
\theoremstyle{definition}
\newtheorem{definition}[theorem]{Definition}
\newtheorem{fact}[theorem]{Fact}
\theoremstyle{remark}
\numberwithin{equation}{section}
\newcommand{\sai} {\mbox{$\to \kern -0.50 em \to$}}
\newcommand{\nsai} {\mbox{$\not\to \kern -0.50 em \to$}}
\newcommand{\tn}{\operatorname{tn}}
\newcommand{\mv}{\operatorname{mv}}
\newcommand{\gm}{\operatorname{gm}}
\begin{document}

\title[Elastic Banach spaces are universal
\today]{Separable Elastic Banach spaces are universal}

\author{Dale E. Alspach
}
\address{Department of Mathematics, Oklahoma State University, Stillwater
OK 74078, USA}
\email{alspach@math.okstate.edu}
\thanks{The second author was partially supported by Simons Foundation
Grant \#208290.}
\author{B\"unyamin Sar\i}
\address{Department of Mathematics, University of North Texas, 1155
Union Circle \#311430,
Denton, TX 76203-5017}
\email{bunyamin@unt.edu}

\subjclass[2010]{Primary 46B03; Secondary 46B25}


\dedicatory{To the memory of Edward Odell}

\keywords{Elastic Banach space, Bourgain's
index, weak injective, reproducible basis, spaces of continuous functions
on ordinals}

\begin{abstract} { A Banach space $X$ is {\em elastic} if there is a
constant $K$ so that whenever a Banach space $Y$ embeds into $X$, then
there is an embedding of $Y$ into $X$ with constant $K$. We prove that
$C[0,1]$ embeds into separable infinite dimensional
elastic Banach spaces, and therefore
they are universal for all separable Banach spaces. This confirms
a conjecture
of Johnson and Odell. The proof uses incremental embeddings into $X$ of
$C(K)$ spaces for countable compact $K$ of increasing complexity. To
achieve this we develop a generalization of Bourgain's basis index that
applies to unconditional sums
of Banach spaces and prove a strengthening of the weak injectivity
property of these $C(K)$ that is realized on special reproducible bases.
} \end{abstract}

\maketitle




\par
\section{Introduction}

A Banach space $X$ is $K$-{\em elastic} provided that if a Banach space
$Y$ embeds into $X$ then $Y$ must $K$-embed into $X$. That is, there is
an isomorphism $T$ from $Y$ into $X$ with $$\|y\|\le \|Ty\|\le K\|y\|$$
for all $y\in Y$. $X$ is called {\em elastic} if it is $K$-elastic for
some $K<\infty$. The space $C[0,1]$ is $1$-elastic simply because it is
universal; every separable Banach space $1$-embeds into $C[0,1]$. Thus
if a separable
Banach space $X$ contains an isomorphic copy of $C[0,1]$, then $X$
is elastic. Johnson and Odell conjectured that such spaces are the only
separable elastic Banach spaces. In this paper we prove this
conjecture.

\begin{theorem}\label{mainthm}
Let $X$ be a separable infinite dimensional elastic Banach space. Then
$C[0,1]$ is isomorphic to a subspace of $X$.
\end{theorem}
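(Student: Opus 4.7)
The plan is to reduce to Bourgain's $C(K)$-index theorem: a separable Banach space contains $C[0,1]$ as soon as it contains, with a uniform embedding constant, isomorphic copies of $C(K)$ for countable compacta $K$ whose Cantor--Bendixson indices are cofinal in $\omega_1$. The goal therefore becomes transfinite: for every countable ordinal $\alpha$, exhibit inside $X$ a copy of some $C(K_\alpha)$ of CB-rank at least $\alpha$, with embedding constant depending only on the elasticity constant $K$.

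I would proceed by transfinite induction on $\alpha$. The base case amounts to showing $c_0 \hookrightarrow X$: since $X$ is infinite dimensional it contains a normalized basic sequence, and standard Banach space techniques combined with $K$-elasticity force $c_0$ to embed with constant controlled by $K$. For the inductive successor step, assume $C(K_\alpha)$ embeds in $X$ uniformly and form the unconditional sum $Y_\alpha = \bigl(\sum_n C(K_\alpha)\bigr)_{c_0}$, which is isomorphic to a $C(K_{\alpha+1})$ of strictly greater CB-complexity. The first task is to realize $Y_\alpha$ as a subspace of $X$. This is achieved by pasting a sequence of embeddings $J_n : C(K_\alpha) \to X$ onto a $c_0$-basis already present in $X$, each $J_n$ of constant $K$ by elasticity; the generalization of Bourgain's basis index to unconditional sums, developed in the paper, legitimates this pasting at the level of bases, and the strengthened weak injectivity on the reproducible basis of $C(K_\alpha)$ ensures that the resulting sum embedding has constant controlled by a function of $K$ alone. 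The limit-ordinal case follows from the same sum-embedding mechanism applied to a pointed union $K_\alpha = \bigvee_{\beta < \alpha} K_\beta$, using that all prior embedding constants are uniformly bounded.

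The main obstacle is the inductive step, and more precisely the control of the embedding constant of the unconditional sum $Y_\alpha$ into $X$. Elasticity is a single-embedding statement and gives no a priori information about sums of embeddings. The new combination of strengthened weak injectivity with reproducible bases is engineered exactly for this: it permits successive embeddings to be placed on nearly disjoint supports in $X$ without blowing up the sum constant, and it lets the generalized Bourgain index for unconditional sums be translated into an actual increase of one level in the CB-rank of the $C(K)$ realized inside $X$. Maintaining this uniform control through all countable ordinals is the technical heart of the argument; once done, Bourgain's theorem immediately produces the desired copy of $C[0,1]$.
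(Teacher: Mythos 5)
Your overall strategy is the paper's: obtain $c_0$ inside $X$ from the Johnson--Odell result (Theorem \ref{c0}), run a transfinite induction producing copies of $C_0(\omega^{\omega^\alpha})$ in $X$ for every countable $\alpha$ by passing to $c_0$-sums at successor and limit stages, and finish with Bourgain's universality theorem (Theorem \ref{Bourgain}). The skeleton is right, and you correctly locate the difficulty in controlling the embedding of the unconditional sum.

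The gap is in the mechanism you give for realizing $\bigl(\sum_n C(K_\alpha)\bigr)_{c_0}$ inside $X$. ``Pasting a sequence of embeddings $J_n$ onto a $c_0$-basis already present in $X$,'' even with the images placed on disjoint supports relative to a basis of an overspace, does not yield a $c_0$-sum: disjointly supported infinite-dimensional subspaces need not interact like a $c_0$-sum, and neither reproducibility nor weak injectivity changes that by itself. What actually makes the step work in the paper (Proposition \ref{c0sum}) is a second transfinite induction hidden inside it: the complementably reproducible bases and the interweaving game position the summands so that, after an explicit renorming of their closed span, \emph{finite} nodes become exact $c_0$-sums; elasticity is then applied to the renormed space to re-embed it into $X$ with constant $K$, resetting the constant; iterating this through all countable ordinals produces $\bigl(\sum Y_n\bigr)_{c_0}$-trees in $X$ of every order $<\omega_1$; and only then does the new ordinal index theorem (Theorem \ref{index_omega1}) convert a tree of index $\omega_1$ into an honest infinite branch, i.e., an embedded copy of the full sum. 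Without this renorm-and-re-embed loop and the final index extraction, the successor step as you describe it does not close.
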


Johnson and Odell introduced the notion of an elastic space in
\cite{JohnOdell} where it plays a pivotal role in their proof of the main
result that for infinite dimensional separable Banach spaces
{\em the diameter in the Banach-Mazur distance of the isomorphism
class of a space is infinite.} This is derived as an
immediate consequence of the following.

\begin{theorem}[Johnson-Odell]\label{JOthm}
If $X$ is a separable Banach space and there is a $K$ so that every
isomorph of $X$ is $K$-elastic, then $X$ is finite-dimensional.
\end{theorem}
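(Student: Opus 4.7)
The plan is to argue by contradiction, combining Theorem \ref{mainthm} with an ordinal-index argument. Suppose $X$ is separable, infinite-dimensional, and every isomorph of $X$ is $K$-elastic for some fixed $K < \infty$. Applying Theorem \ref{mainthm} to each isomorph $Y$ of $X$, the space $C[0,1]$ embeds into $Y$; by $K$-elasticity of $Y$, this embedding may be chosen with constant at most $K$. Since $C[0,1]$ is isometrically universal for separable Banach spaces, it follows that every separable Banach space $K$-embeds into every isomorph of $X$. In particular, for each countable ordinal $\alpha$ the $C(K)$-space $C(\omega^{\omega^\alpha})$ $K$-embeds into every isomorph of $X$.

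The contradiction is produced by constructing an isomorph $Y$ of $X$ whose complexity, measured by a Bourgain-type basis index (in the spirit of the generalized index for unconditional sums developed in the paper), is bounded by some countable ordinal $\beta = \beta(X,K)$. Since the index of $C(\omega^{\omega^\alpha})$ grows unboundedly in $\alpha$, a $K$-embedding of $C(\omega^{\omega^\alpha})$ for $\alpha > \beta$ would force the index of $Y$ past $\beta$, contradicting the bound.

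To construct such a $Y$, I would take an increasing chain of finite-dimensional subspaces $(F_n)$ in $X$ with dense union, and define an equivalent norm that inflates $\|\cdot\|$ on $F_n$ by rapidly growing factors $\lambda_n \to \infty$ while leaving a complementary structure essentially fixed. The resulting $Y$ is isomorphic to $X$ but has a transparent skeletal structure: the dilated blocks force any $K$-bounded Schauder basis in $Y$ to be controlled by the $\lambda_n$, pinning the Bourgain-type index to a small countable ordinal depending on $K$ and the growth rate.

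The principal obstacle is the stability and computation of the generalized Bourgain index. One must prove (i) that $K$-isomorphism perturbs the index only by a bounded function of $K$, so that the index is a quasi-isomorphism invariant, and (ii) that the explicit renorming $Y$ above really has a bounded index. Both points require the fine ordinal analysis that the paper develops for unconditional sums and reproducible bases; without this machinery the obstruction $\beta(X,K)$ cannot be pinned down and the gap between the hypothesized universality and the actual complexity of $Y$ cannot be quantified.
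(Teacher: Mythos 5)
Your opening paragraph is exactly the paper's first step: by Theorem \ref{mainthm} every isomorph $Y$ of $X$ contains a copy of $C[0,1]$, and the assumed $K$-elasticity of $Y$ upgrades this to a $K$-embedding, so every isomorph of $X$ contains a $K$-copy of $C[0,1]$. The contradiction must then come from exhibiting one isomorph of $X$ into which $C[0,1]$ does \emph{not} $K$-embed, and this is where your proposal has a genuine gap. The paper obtains such an isomorph by quoting the Lindenstrauss--Pe\l czy\'nski distortion theorem \cite{LP}: for each $n$ there is an equivalent norm $|\cdot|_n$ on $C[0,1]$ such that $C[0,1]$ with its usual norm does not $n$-embed into $(C[0,1],|\cdot|_n)$. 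Then $(X,|\cdot|_n)$ is an isomorph of $X$ sitting isomorphically inside $(C[0,1],|\cdot|_n)$, so a $K$-copy of $C[0,1]$ inside $(X,|\cdot|_n)$ would give a $K$-copy inside $(C[0,1],|\cdot|_n)$, impossible once $n>K$. You instead try to manufacture the bad isomorph from scratch, and the construction you sketch cannot work as stated: if $(F_n)$ is an increasing chain of finite-dimensional subspaces with \emph{dense} union and the new norm dominates $\lambda_n\|\cdot\|$ on $F_n$ with $\lambda_n\to\infty$, then no inequality of the form $|x|\le C\|x\|$ can hold on a dense set, so $|\cdot|$ is not an equivalent norm and you have not produced an isomorph of $X$ at all. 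Distorting renormings must concentrate the inflation on sets that are far from norm-dense (this is how the norms $|\cdot|_n$ of \cite{LP} are built), and proving that such a renorming genuinely obstructs $K$-embeddings of $C[0,1]$ is precisely the content of the theorem you would be rebuilding.

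The second unsupported step is the index argument. Even granting a legitimate renorming, you assert without proof both that the resulting isomorph has Bourgain-type index bounded by some countable $\beta(X,K)$ and that a $K$-embedding of $C(\omega^{\omega^\alpha})$ forces the index past $\beta$ for large $\alpha$. The difficulty you flag as ``(i)'' --- that the index is only meaningful relative to a fixed embedding constant --- is not a technical obstacle to be deferred; it is the entire problem, since $C(\omega^{\omega^\alpha})$ embeds into every isomorph of $X$ with \emph{some} constant, and only the $K$-constrained index can separate the spaces. None of the machinery of Sections 2--4 is designed to supply these two facts. The paper's actual deduction of Theorem \ref{JOthm} from Theorem \ref{mainthm} is three lines long and uses no index at all; the single external ingredient is the arbitrary distortability of $C[0,1]$ from \cite{LP}, which your proposal would need to cite (or reprove correctly) to close the gap.
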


The reason for the conjecture is that, as they noted, Theorem \ref{JOthm}
would be an immediate consequence of Theorem \ref{mainthm}. We outline
this argument below.

\begin{proof}[Proof of Theorem \ref{JOthm}] In \cite{LP} it is shown
there that there are equivalent norms that `arbitrarily
distort' the usual norm of $C[0,1]$, that is, for all $n$, there exists
an equivalent norm $|\cdot|_n$ on $C[0,1]$ so that the best embedding
constant of $C[0,1]$ with the usual norm into $(C[0,1], |\cdot|_n)$
is greater than $n$.
Let $X$ be as in the hypothesis.
For each $n,$ $X$ is isomorphic to a subspace of $C[0,1]$ with norm
$|\cdot|_n$ and is $K$-elastic with this norm.
If $X$ contains a subspace $Y$ which is isomorphic
to $C[0,1]$,
$C[0,1]$ with its
usual norm is $K$-isomorphic to a subspace of $(X, |\cdot|_n)$, and
consequently, of $(C[0,1], |\cdot|_n).$ This is a contradiction for $n$
large enough.
  \end{proof}

One of the main steps in \cite{JohnOdell} in proving Theorem~\ref{JOthm}
is to show that an elastic space must contain a nice space.

\begin{theorem}[Johnson-Odell] \label{c0} Let $X$ be elastic, separable
and infinite dimensional.
Then $c_0$ is isomorphic to a subspace of $X$.
\end{theorem}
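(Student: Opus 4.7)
The plan is to argue by contradiction: suppose $X$ is $K$-elastic, separable, and infinite-dimensional, yet $c_0 \not\hookrightarrow X$. I aim to derive a contradiction in two steps.

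Step 1 is to show that $X$ has no non-trivial cotype. Every finite-dimensional Banach space $E$ isomorphically embeds into every infinite-dimensional Banach space --- for instance, by Dvoretzky's theorem one has $\ell_2^n \hookrightarrow X$ almost isometrically, and any $n$-dimensional $E$ is $C(n)$-isomorphic to $\ell_2^n$. Therefore, by the $K$-elasticity of $X$, every finite-dimensional $E$ in fact $K$-embeds into $X$, with the constant $K$ independent of $\dim E$. Taking $E = \ell_\infty^n$ gives $K$-uniform finite representability of $c_0$ in $X$. A standard cotype argument (cotype $q < \infty$ forces the distortion of $\ell_\infty^n$ copies to grow like $n^{1/2-1/q}$) then shows $X$ has no non-trivial cotype.

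Step 2 is to upgrade this uniform finite representability of $c_0$ to an actual subspace of $X$ isomorphic to $c_0$. Using separability of $X$, fix a basic sequence $(x_j)\subset X$ and apply $K$-elasticity iteratively to place $K$-copies of $\ell_\infty^n$ with progressively larger and more disjoint supports relative to $(x_j)$; a diagonal gliding-hump extraction then produces a block basic sequence in $X$ equivalent to the $c_0$ basis, contradicting $c_0 \not\hookrightarrow X$. In parallel one can try the Bessaga--Pelczynski route: under the standing assumption every weakly unconditionally Cauchy series in $X$ converges in norm, so producing a bounded non-convergent wuC series via the uniformly embedded $\ell_\infty^n$ copies would immediately give the contradiction.

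The main obstacle is Step 2: $K$-elasticity provides uniform embedding constants only for embeddings into $X$ as a whole, not into prescribed subspaces. To localise embeddings into tails of a chosen basic sequence requires an additional ingredient, such as a tree-like inductive construction invoking elasticity with modified targets at each stage, combined with a pigeonhole or compactness argument to control supports. Executing this step rigorously is where the heart of the proof lies, and it is likely where the Johnson--Odell proof uses a more substantial structural tool (perhaps involving the dual space or an auxiliary countable $C(K)$ in which $X$ sits) beyond the bare finite representability of $c_0$ delivered by Step 1.
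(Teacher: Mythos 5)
Your Step 1 is correct and is in fact the right seed for the argument: since every $n$-dimensional space embeds into the infinite-dimensional space $X$, elasticity forces $\ell_\infty^n$ to $K$-embed into $X$ for every $n$ with $K$ independent of $n$. But Step 2, which you candidly flag as the obstacle, is a genuine gap and cannot be closed by cotype, gliding-hump, or Bessaga--Pe\l czy\'nski arguments alone. Uniform finite representability of $c_0$ (equivalently, trivial cotype) does not imply that $c_0$ embeds: the space $\bigl(\sum_n \ell_\infty^n\bigr)_{\ell_2}$ contains the $\ell_\infty^n$ uniformly yet is reflexive, hence contains no copy of $c_0$. The $K$-copies of $\ell_\infty^n$ that elasticity hands you sit in $X$ with no coherence whatsoever --- they need not be nested, need not lie in tails of your fixed basic sequence, and need not assemble into a weakly unconditionally Cauchy series --- so no diagonal extraction can be run on them directly.

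The missing ingredient (this theorem is quoted from Johnson--Odell and not reproved in the paper, but the paper describes the method and Proposition~\ref{c0sum} generalizes it) is Bourgain's ordinal $c_0$-basis index combined with a transfinite induction in which elasticity is invoked at every stage, not just once. One considers the tree whose nodes are finite sequences in $X$ that are $K'$-equivalent to the unit vector basis of $\ell_\infty^n$, and shows by induction on countable ordinals $\alpha$ that this tree has order at least $\alpha$: given a tree of order $\alpha$, one renorms the closed span of its nodes so that the tree there has constant $1$, observes that the renormed space still embeds into $X$, and applies $K$-elasticity to re-embed it with the \emph{same} constant $K$, which is what lets the construction pass through limit ordinals without the constants degenerating. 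Your Step 1 only yields nodes of every finite length, i.e.\ index at least $\omega$; the renorm-and-re-embed loop is what pushes the index to $\omega_1$. Finally, for a separable space a closed tree of order $\omega_1$ has an infinite branch, and that branch is a sequence equivalent to the $c_0$ basis. Without this index machinery there is no bridge from ``$\ell_\infty^n$ uniformly'' to ``$c_0$ isomorphically,'' and the proposed proof does not go through.
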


The proof of this theorem uses Bourgain's basis index and a clever
transfinite
induction argument. Our proof of Theorem~\ref{mainthm} follows the same
general outline as their argument.
The main machinery (Proposition
\ref{c0sum}) in our proof is to show that whenever a sequence of
$C(K_n)$ spaces embed into $X$ where each $K_n$ is countable and compact,
then $\Big(\sum_{n=1}^{\infty}C(K_n)\Big)_{c_0}$ embeds into $X$. This
can be seen as a higher dimensional analogue of Theorem \ref{c0}. However,
to be able to carry out such an extension by generalizing the proof
given by Johnson and Odell one faces two rather fundamental
problems. The first is that one needs to be able to do a Bourgain basis
index argument for a basis of the $c_0$-sum. There does not seem to be a
feasible way of doing so in this setting since no basis has such a simple
homogeneous structure as the usual basis of $c_0$ itself has.
We solve this problem by not working with the basis index but
rather developing an ordinal index for unconditional sums of Banach
spaces. This is done in Section 3 and may be of independent interest.
The second major problem is that the proof of Theorem~\ref{c0} requires
embedding a countable family of incrementally renormed
spaces $Y_\alpha$ into an elastic space as spans of
blocks. Working with sums of infinite dimensional spaces requires
replacing
blocks by well positioned subspaces. To be able to `dig ourselves out
of this
hole' by patching together $Y_\alpha$ spaces of the stage $\alpha$,
one needs
copies of these spaces to be well complemented with nice projections
that have rather large kernels. This is achieved by a strengthening
of a remarkable theorem of Pelczynski that separable $C(K)$ spaces are
{\em weak
injective}  \cite{P} (See also \cite[Theorem~3.1]{R}.), and of a useful
observation due
to Lindenstrauss and Pelczynski that $C(K)$ spaces have {\em reproducible
bases} \cite{LP}.  These are addressed in Section 2. The proofs of
the main machinery (Proposition \ref{c0sum}) and Theorem \ref{mainthm}
are given in Section 4. Once Proposition \ref{c0sum} is proved, Theorem
\ref{mainthm} is easily deduced using a well known theorem of Bourgain.
(See \cite[Proposition~2.3]{O}.)

\begin{theorem}[Bourgain]\label{Bourgain} If $X$ is universal for the
class of spaces $C(K)$ where $K$ is countable compact metric, then $X$
contains an isomorphic copy of $C[0,1]$.  \end{theorem}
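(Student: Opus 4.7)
The plan is to prove the contrapositive: assuming $X$ does not contain an isomorphic copy of $C[0,1]$, I would exhibit a countable compact metric $K$ for which $C(K)$ fails to embed into $X$, contradicting universality.

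By the Bessaga--Pelczynski classification of $C(K)$ for countable compact metric $K$, every such space is isomorphic to $C([0,\omega^{\omega^\alpha}])$ for a unique $\alpha<\omega_1$. Universality therefore gives, for each $\alpha<\omega_1$, an embedding $T_\alpha\colon C([0,\omega^{\omega^\alpha}])\to X$ with some distortion $K_\alpha$. The task is to show that the family $\{T_\alpha\}_{\alpha<\omega_1}$ forces a copy of $C[0,1]$ inside $X$.

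To organize this, I would employ a Bourgain-type ordinal index $\iota_K(X)$ measuring the largest countable ordinal $\alpha$ such that $C([0,\omega^{\omega^\alpha}])$ admits an embedding into $X$ of distortion at most $K$. The essential ingredient is the dichotomy: for each fixed $K$, the index $\iota_K(X)$ is a countable ordinal whenever $C[0,1]$ does not embed into $X$. Once this is in hand, a boundedness step finishes the job: the countable supremum $\sup_{K\in\mathbb{N}}\iota_K(X)$ is itself a countable ordinal $\alpha^{*}<\omega_1$, and then $C([0,\omega^{\omega^{\alpha^{*}+1}}])$ cannot be embedded into $X$ with any finite distortion, directly contradicting universality.

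The main obstacle is the dichotomy itself. Bourgain's original argument establishes it through a tree analysis: a $K$-embedding of $C([0,\omega^{\omega^\alpha}])$ into $X$ encodes a certain $c_{0}$-indexed tree in $X$ of ordinal height $\alpha$, and conversely, uniformly bounded-distortion trees of unbounded countable heights inside the separable $X$ can be amalgamated, by a pruning and weak-compactness argument on finite configurations, into a $c_{0}$-tree indexed by the full dyadic tree. Such a dyadic $c_{0}$-tree generates a copy of $C(\Delta)\simeq C[0,1]$ inside $X$. The delicate point is the uniformization across $\alpha$: the individually incompatible embeddings $T_\alpha$ must be replaced by finite approximations with a common coherent structure, which is accomplished by a Baire category / diagonalization argument in the Polish space of finite tuples in $X$.
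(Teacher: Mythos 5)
First, a point of reference: the paper does not prove this theorem. It is quoted as a known result of Bourgain, with a pointer to \cite[Proposition~2.3]{O}, and is used as a black box in the deduction of Theorem \ref{mainthm} from Proposition \ref{c0sum}. So there is no in-paper proof to compare against; what can be assessed is whether your outline would actually yield Bourgain's theorem.

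Your skeleton --- reduce via Bessaga--Pe\l czy\'nski to the spaces $C(\omega^{\omega^\alpha})$, introduce a $K$-embeddability index $\iota_K(X)$, prove that $C[0,1]\not\hookrightarrow X$ forces $\iota_K(X)<\omega_1$ for each $K$, and then use regularity of $\omega_1$ to bound $\sup_K\iota_K(X)$ and contradict universality --- is exactly the standard architecture, and the bookkeeping around it (monotonicity of the index in $\alpha$, the countable supremum, ``no embedding of any finite distortion'' meaning ``no embedding'') is fine. The genuine gap is that the dichotomy, which you correctly identify as ``the main obstacle,'' \emph{is} the theorem, and your description of how to establish it does not amount to an argument. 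Concretely: (a) you never specify the tree whose ordinal index you are taking; the workable choice is a Bourgain-type tree of finite tuples in $X$ that are $K$-equivalent to initial segments of a fixed basis (of $C(\Delta)$, or of the transfinite standard bases as in Section 2 of this paper), and the assertion that a $K$-copy of $C(\omega^{\omega^\alpha})$ forces that tree to survive roughly $\alpha$ derivations is a nontrivial combinatorial lemma about those bases that nothing in your sketch supplies; (b) the amalgamation of high-order trees into an infinite branch (or a full dyadic family spanning $C(\Delta)$) in a separable space is carried out by a pigeonhole/stabilization argument over countable nets --- compare the proof of Theorem \ref{index_omega1} in this paper --- not by ``weak-compactness'' or ``Baire category,'' and passing from a linearly ordered infinite branch to a dyadic-tree system equivalent to the basis of $C(\Delta)$ is precisely where the specific structure of the $C(K)$ bases must be used. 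As written, the proposal is a correct table of contents for Bourgain's proof rather than a proof: every step that is routine is done, and the one step that carries the content is asserted with an inaccurate mechanism.
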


\section{Complementably reproducible bases of $C(\omega^\alpha)$}
In this section we give strengthenings of two important properties of
$C(K)$ spaces that are instrumental for the proof of the main result.

The first property, due to Lindenstrauss and Pelczynski
\cite[Theorem~4.3]{LP}, asserts in particular
that the canonical bases (explained below) of $C(K)$ spaces for countable
compact $K$ are {\em reproducible}. A basis $(x_n)$ is reproducible with
constant $K$ if whenever $[(x_n)]$ is isometrically embedded into a space
$X$ with a basis, one can find, for every $\epsilon>0$, a block basis
in $X$ that is $(K+\epsilon)$-equivalent to $(x_n)$. In our variant,
the connection between the isomorphism and the block basis is explicit
and realized by an infinite sequence of finite processes that lends
itself to incorporating into other constructions. In the definition we
use an infinite two-player game that is played in a Banach space $Y$
with a basis $(y_j)$ for an outcome $\mathcal O$. On turn $k$ the first
player chooses a tail subspace $[(y_j)_{j\ge m_k}]$ and the second
player picks a vector $x_k \in [(y_j)_{j\ge m_k}]$. The second player
is said to have a winning strategy for an outcome $\mathcal O$ if no
matter how the first player chooses, the sequence $(x_k)_{k=1}^\infty$
satisfies $\mathcal O$. Note that since the first player can push the
supports of $x_k$'s arbitrarily far out, the resulting sequence will be
(a tiny perturbation of) a block basis of $(y_j)$.

\begin{definition}
We say that a basis $(x_n)$ of a Banach space $X$ is {\em two-player
subsequentially $C$-reproducible} if for any  sequence of positive numbers
$(\epsilon_k)_{k \in \mathbb N}$ and isomorphic
embedding $T$ of $X$ into a Banach space $Y$ with a basis $(y_n)$,
there is a winning strategy for the second player in a two-player game
in $Y$ for picking a subsequence $(T x_{n_k})_{k=1}^{\infty}$ and blocks
$(w_k)_{k=1}^\infty$ of the basis $(y_n)$ such that
\begin{enumerate} 
\item{} $\|T x_{n_k} - w_k\|<\epsilon_k$ for
each $k\in \mathbb N,$
\item{} $(x_{n_k})$ is $C$-equivalent to $(x_n)$.
\end{enumerate}
\end{definition}

Recall that any $C(K)$ space with countable compact $K$ is isomorphic
to some $C(\alpha)$ space where the latter denotes the space $C[1,
\alpha]$ of continuous functions on a successor ordinal $\alpha+1<\omega_1$
equipped with the order topology, \cite{MS}. For a given compact metric
space $K$,
the corresponding $\alpha$ is determined as follows. Let $$K^{(1)}=\{k:
\exists k_n \in K,\, n=1,2,3,\dots, k_n\ne k_m, m\ne n, k_n \rightarrow k\}$$ be
the set of limit points of $K$. Put $K^{(\alpha+1)}=(K^{(\alpha)})^{(1)}$,
and $K^{(\beta)}=\bigcap_{\alpha<\beta} K^{(\alpha)}$ if $\beta$ is a
limit ordinal. Let $o(K)$ be the smallest ordinal such that $K^{(o(K))}$
has finite cardinality or, if $K^{(\alpha)}$ is always infinite,  let
$o(K)=\omega_1$.  If $\omega^{\alpha}\le o(K) <\omega^{\alpha+1}$,
then $C(K)$ is isomorphic to $C(\omega^{\omega^\alpha})$ \cite{BP2}.

The standard bases $(x^\alpha_n)_{n=0}^{\infty}$ of $C(\omega^\alpha)$
are described inductively. For $C(\omega)$, let $x^1_0=\mathbbm{1}_{(0,
\omega]}$ and $x^1_n=\mathbbm{1}_{\{n\}}$ for all $n<\omega$.
If the basis $(x^\gamma_n)_n$ for $C(\omega^\gamma)$ is defined, then
for each $k<\omega$ let $x^\gamma_{k,n}$ have support in
$(\omega^\gamma(k-1),\omega^\gamma k]$ and satisfy
\begin{equation}\label{basis-def}
x^\gamma_{k,n}(\rho)=x^\gamma_n(\rho-\omega^\gamma (k-1))
\ {\rm for}\ \omega^\gamma(k-1)<
\rho \le
\omega^\gamma k.
\end{equation}

Let $x^{\gamma+1}_0=\mathbbm{1}_{(0, \omega^{\gamma+1}]}$, and
$(x^{\gamma+1}_j)_{j\ge 1}$ be an ordering of $\{x^\gamma_{k,n}:
n=0,1,2\ldots, k\in\mathbb N\}$ such that
\begin{equation}\label{basis order}
{\rm if}\ x^{\gamma+1}_j=x^\gamma_{k,n}\ {\rm and}\
x^{\gamma+1}_m=x^\gamma_{k,n'}\ {\rm and}\ n<n',\ {\rm then}\
j<m.  \end{equation}
That is, the order of the basis is such that
whenever the support of one function is contained in another, the
top function precedes in the order.  If $\gamma$ is a limit ordinal, we
fix a strictly increasing sequence $(\gamma_k)$ with limit $\gamma$,
and let
$x^\gamma_{k,n}$ have support in
$(\omega^{\gamma_{k-1}},\omega^{\gamma_k}]$ and satisfy
$$x_{k,n}(\rho)=x^{\gamma_k}_n(\rho-\omega^{\gamma_{k-1}}),\
\ {\rm for}\ \omega^{\gamma_{k-1}}<
\rho \le
\omega^{\gamma_k},\ 
k\in\mathbb N,  n=0,1,\ldots$$
where we set $\omega^{\gamma_0}=0$.
Then proceed analogously to define $(x^\gamma_n)_{n=0}^{\infty}$
where $x^{\gamma}_0=\mathbbm{1}_{(0,\omega^\gamma]}$.

For $C_0(\omega^\alpha)$ a standard basis is $(x^\alpha_n)_{n=1}^\alpha.$
It is not hard to see with this construction that for any $\gamma$
and
$n$ the sequence $(x^\gamma_j)_{j\in M}$ where $M=\{j:\text{supp }x^\gamma_j \subsetneq
\text{supp
}x^\gamma_n\}$ is $1$-equivalent to a standard basis of $C_0(\omega^\beta)$
for
some $\beta<\gamma.$  Also we have the following.

\begin{fact}\label{basis-eq}
Consider the set of the supports of basis functions endowed with the
partial
order of inclusion.  Let $M \subset \mathbb N$ and $\phi:M \rightarrow
\mathbb N$ be injective. Then two subsequences $(x^\alpha_i)_{i\in M}$ and
$(x^\alpha_{\phi(i)})_{i\in M}$ are $1$-equivalent if and only if $\phi$
induces an order isomorphism from $\{\text{supp }x^\alpha_i:i\in M\}$
to $\{\text{supp }x^\alpha_{\phi(i)}:i\in M\}$.
\end{fact}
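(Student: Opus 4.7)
My plan is to reduce the statement to a combinatorial invariant of the support poset. Three properties of the standard basis, all established by a routine transfinite induction on the construction (\ref{basis-def}) and its limit analogue, make this possible:
\begin{enumerate}
\item each $x^\alpha_n$ is the characteristic function $\mathbbm{1}_{\text{supp }x^\alpha_n}$, since this is evident for $C(\omega)$ and is preserved by the shift-and-relabel recursion;
\item any two basis supports are either disjoint or one contains the other, so the family of supports is a forest under inclusion;
\item every support $A$ contains a ``top'' ordinal $\tau_A \in A$ lying in no proper subsupport.
\end{enumerate}

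Granting (1)--(3), for any finite $F\subset M$ and scalars $(a_i)_{i\in F}$, setting $\mathcal{P}_F=\{\text{supp }x^\alpha_i:i\in F\}$ and using (1),
$$\Big\|\sum_{i\in F}a_i x^\alpha_i\Big\|=\sup_\rho\Big|\sum_{i\in F,\;\rho\in\text{supp }x^\alpha_i}a_i\Big|.$$
By (2), the index set in the inner sum is a chain in $\mathcal{P}_F$ that is upward closed (if $\rho\in A\subseteq B$ then $\rho\in B$); conversely, (3) ensures that for each $A_0\in\mathcal{P}_F$ the choice $\rho=\tau_{A_0}$ realizes exactly the upward-closed chain $\{A\in\mathcal{P}_F:A\supseteq A_0\}$. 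Hence the norm depends only on the isomorphism type of $\mathcal{P}_F$ under inclusion. The forward direction of the Fact then follows: if $\phi$ is an order isomorphism of the two support posets, then for every finite $F$ the posets $\mathcal{P}_F$ and $\{\text{supp }x^\alpha_{\phi(i)}:i\in F\}$ are isomorphic and the corresponding norms coincide.

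For the converse I would argue contrapositively. If $\phi$ fails to preserve inclusion, some pair $i,j\in M$ witnesses the failure, falling into one of four cases (a nested pair sent to a disjoint pair, a nested pair with reversed nesting, or either of the two disjoint-to-nested configurations); in each case a two-term computation using the top points of (3) produces coefficients distinguishing the norms---for example, when $\text{supp }x^\alpha_i\subsetneq\text{supp }x^\alpha_j$ but $\text{supp }x^\alpha_{\phi(j)}\subsetneq\text{supp }x^\alpha_{\phi(i)}$, the choice $a_i=2,\;a_j=-1$ gives $\|2x^\alpha_i-x^\alpha_j\|=1$ while $\|2x^\alpha_{\phi(i)}-x^\alpha_{\phi(j)}\|=2$. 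The main technical point to verify is (3): in the successor step $\alpha=\gamma+1$, each $\text{supp }x^\gamma_{k,n}$ has top ordinal obtained by shifting the top of $\text{supp }x^\gamma_n$ into the $k$-th block $(\omega^\gamma(k-1),\omega^\gamma k]$, which avoids all smaller supports in its block by inductive hypothesis and all supports outside the block by disjointness; the limit step is analogous. Everything else is routine bookkeeping on $\mathcal{P}_F$.
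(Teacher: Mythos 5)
The paper states this Fact without proof (it is offered as something ``not hard to see'' from the construction), and your argument is correct: the three structural properties of the standard basis all hold by the inductions you indicate, the identity $\bigl\|\sum_{i\in F}a_i x^\alpha_i\bigr\|=\max_{A_0\in\mathcal P_F}\bigl|\sum_{A\supseteq A_0}a_{i(A)}\bigr|$ follows exactly as you say (every nonempty set of supports containing a given point is a principal up-set of the forest, and every principal up-set is realized at the top point $\tau_{A_0}$), and the two-term computations correctly separate the four failure configurations in the converse. This is evidently the reduction the authors had in mind, given their surrounding discussion of supports ordered by inclusion and of the largest ordinal $\gamma(n)$ in each support, so your write-up simply supplies the details the paper omits.
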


Note also that the
basis is dependent on the sequence $(\gamma_k)$ and the choices in the
ordering of the $(x_{k,n})$, $k\in \mathbb N.$ In this paper we are
usually able to pass to a subsequence when needed. The following lemma
shows that these choices of $(\gamma_k)$ and ordering
are a minor technical annoyance.

\begin{lemma} \label{equiv-subseq} Suppose that $(x_n)$ and $(y_n)$
are standard bases of
$C(\omega^\alpha)$, respectively, $C_0(\omega^\alpha),$ chosen as above.
Then there
exists a subsequence of
$(x_n)$ which is $1$-equivalent to $(y_n)$ and has closed span which is
contractively complemented in $C(\omega^\alpha)$, $C_0(\omega^\alpha)$,
respectively. Moreover the subsequence can be chosen by a two-player game.
\end{lemma}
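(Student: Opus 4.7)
The plan is to proceed by transfinite induction on $\alpha$, exploiting the fact that every standard basis function is the indicator of a clopen interval $(\sigma,\tau]\subset[0,\omega^\alpha]$, so the norm of any finite combination depends only on the inclusion poset of the supports that appear. Together with Fact~\ref{basis-eq}, this means that a subsequence $(x_{n_k})$ is $1$-equivalent to $(y_n)$ precisely when $k\mapsto n_k$ induces an order-isomorphism of the support sets. The entire task thus reduces to a combinatorial selection game: starting from the tails prescribed by player~1, player~2 must pick basis elements whose supports reproduce the inclusion structure of the supports of $(y_n)$. Once this is achieved, the maximal ordinals of the selected supports, closed up under limits, form a closed subset $A\subset[0,\omega^\alpha]$ with the same order type as $[0,\omega^\alpha]$ (respectively $[0,\omega^\alpha)$), and the \emph{next point of $A$} retraction $[0,\omega^\alpha]\to A$ furnishes a canonical norm-one projection onto the closed span of the chosen subsequence.

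The base case $\alpha=1$ is immediate: tails of $(x^1_n)$ consist of singletons, and any infinite subset of singletons is $1$-equivalent to the singleton part of the other basis, with the super-top $\mathbbm{1}_{(0,\omega]}$ (when required) matched at the very first move. For the successor case $\alpha=\gamma+1$, decompose both bases into their $\omega$-many blocks as in \eqref{basis-def}; each block spans a copy of $C(\omega^\gamma)$ equipped with a standard basis. Player~2 maintains a partial bijection $\phi$ between blocks of $(y_n)$ and blocks of $(x_n)$, and runs a $\gamma$-level sub-game (supplied by the inductive hypothesis) inside each matched pair. When the current $y_k$ lives in a block $b$ not yet in the domain of $\phi$, then by \eqref{basis order} $y_k$ is the top of block $b$; player~2 picks an unmatched block $b'$ of $(x_n)$ whose top function has index at least $m_k$ (possible as only finitely many blocks have been used so far) and sets $\phi(b)=b'$. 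When $y_k$ lives in a block $b$ already matched to $b'$, player~2 passes the current $m_k$ to the corresponding sub-game and lets it return an appropriate basis element in block $b'$; this is available because block $b'$ contributes infinitely many basis elements to $(x_n)$, so cofinitely many of them lie in any given tail.

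The limit case is analogous, with the blocks now indexed by strictly increasing sequences $(\gamma_k)\uparrow\gamma$ for $(x_n)$ and $(\delta_k)\uparrow\gamma$ for $(y_n)$. To match block $k$ of $(y_n)$, whose span is a copy of $C(\omega^{\delta_k})$, player~2 waits for an unused block $k'$ of $(x_n)$ with $\gamma_{k'}\ge\delta_k$ to become available in the current tail, and then invokes the inductive hypothesis at level $\gamma_{k'}$ inside block $k'$ to extract a subsequence $1$-equivalent to a standard basis of $C(\omega^{\delta_k})$; by Fact~\ref{basis-eq} this is in turn $1$-equivalent to block $k$ of $(y_n)$. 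The super-top $x^\alpha_0$, present only in the $C(\omega^\alpha)$ case, is matched at the very first move.

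The main obstacle is to run all of these sub-games simultaneously against a single sequence of tail constraints coming from player~1. This is handled by the self-similarity of $(x_n)$: each of its blocks contributes infinitely many basis elements to every tail, so no sub-game is ever locked out by an increase in $m_k$, and because the inductive hypothesis is itself game-theoretic each sub-game can be paused and resumed under an arbitrary subsequent tail restriction. Once the induction closes, the selected subsequence's supports close up to the required $A\subset[0,\omega^\alpha]$, and the nearest-next-point retraction delivers the contractive complementation in either $C(\omega^\alpha)$ or $C_0(\omega^\alpha)$.
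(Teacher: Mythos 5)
Your selection game is essentially the paper's argument: induct on $\alpha$, match each block of $(y_n)$ (identified by its top function, which appears first by the ordering convention) with a not-yet-used block of $(x_n)$ of at least the required complexity lying beyond the current tail constraint, and interleave the lower-level sub-games, which can be paused and resumed precisely because the inductive hypothesis is itself game-theoretic. One thing you should make explicit: the statement you invoke in the limit case --- extracting from a standard basis of $C_0(\omega^{\gamma_{k'}})$ a subsequence $1$-equivalent to a standard basis of $C_0(\omega^{\delta_k})$ with $\delta_k\le\gamma_{k'}$ --- is strictly stronger than the lemma being proved, so the induction must carry this cross-level form as its hypothesis (as the paper's proof does); with the literal statement of the lemma as inductive hypothesis the limit case does not close.

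The genuine gap is the complementation step. The pullback $g\mapsto (g|_A)\circ r$ along the next-point retraction $r$ is indeed a norm-one projection, but its range is $r^*C(A)=\{f\circ r:f\in C(A)\}$, the functions constant on each fiber $r^{-1}(a)$; this is the closed span of the indicators of those fibers, i.e.\ of a \emph{block basis}, not of the chosen subsequence. Already for $\alpha=1$ with chosen subsequence $(\mathbbm{1}_{\{n_k\}})_k$ one gets $A=\{n_k\}_k\cup\{\omega\}$ and $r^*C(A)=[\mathbbm{1}_{(n_{k-1},n_k]}:k\in\mathbb N]$, which differs from $[\mathbbm{1}_{\{n_k\}}:k\in\mathbb N]$ whenever the $n_k$ are not consecutive; so your projection does not map onto the span of the subsequence. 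The fix is the observation with which the paper's proof opens: arrange (as your game automatically does) that whenever $x_j$ is omitted, every $x_n$ with $\supp x_n\subseteq\supp x_j$ is also omitted. Then the coordinate projection $g\mapsto\sum_{n\in M}x_n^*(g)\,x_n$ is contractive, because at each point $\beta$ the selected indices whose supports contain $\beta$ form an initial segment of a support-containment chain, the biorthogonal functionals are differences of Dirac measures that telescope along that chain, and the value of the projected function at $\beta$ is just $g$ evaluated at a single point of $A$. Equivalently, the contractive projection is restriction to $A$ followed by the canonical extension operator back into $[x_n:n\in M]$ --- not composition with a retraction of $[0,\omega^\alpha]$.
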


\begin{proof}
First observe that if we choose a subsequence of $(x_n)$ so that if $x_j$ is
not in the subsequence then for all $n$ such that the support of $x_n$ is
contained in the support of $x_j$, $x_n$ is not in the subsequence, then
the closed span of the subsequence is contractively complemented. Thus
in the
construction below we will choose a subsequence with this property.

The proof is by induction on the countable ordinals $\alpha \ge 1.$
Because in the case of $C(\omega^\alpha)$ the procedure for choosing a
basis requires that $x_1=y_1=\mathbbm 1_{(0,\omega^\alpha]},$ we only
need to consider the case $C_0(\omega^\alpha).$ Our inductive hypothesis
is that there is a strategy for the second player in a two player
game such that for any $\beta \le \alpha$ and subsequence $(x_n)_{n
\in M}$ which is $1$-equivalent to  standard basis of $C_0(\alpha)$
and $(y_{p_k})_{k\in \mathbb N}$ which is $1$-equivalent to a standard
basis of $C_0(\beta)$, the second player is able to choose a subsequence
$(x_{n_k})_{k\in \mathbb N}$ of $(x_n)_{n \in M}$ which is $1$-equivalent
to $(y_{p_k})_{k\in \mathbb N}$.  The game requires that at turn $k$
the first player  presents a natural number $l_k$, $m_{k-1}<l_k$ and
the second player must choose an element $n_k$ of $M$ so that $l_k <n_k$.

If $\alpha=1$, the inductive hypothesis is clearly valid. Suppose that it
holds for all $\alpha<\gamma$ and that $(x_n)_{n\in M}$ and
$(y_{p_n})_{n\in
\mathbb N}$ are subsequences of some standard bases of $C_0(\omega^\xi)$
for
some $\xi \ge \gamma,$
$1$-equivalent to standard bases of $C_0(\omega^\gamma)$ and
$C_0(\omega^\beta)$, respectively, for some $\beta \le \gamma.$ Let
$(y_{q(0,l)})_{l \in \mathbb N}$ be the subsequence of $(y_{p_n})_{n\in
\mathbb N}$ of elements of maximal support. For each $l \in \mathbb N$ let
$(y_{q(l,k)})_{k\in \mathbb N}$ be the subsequence of $(y_{p_n})_{n\in
\mathbb N}$ of all elements $y_{p_j}$ such that $p_j>q(0,l)$ and the
support of
$y_{p_j}$ is contained in the support of $y_{q(0,l)}.$

The game begins with the choice of some natural number $l(1)$ by the
first player. Note that $q(0,1)=p_1$.
$(y_{q(1,k)})_{k\in \mathbb N}$ is $1$-equivalent to a standard
basis of
$C_0(\omega^{\gamma(1)})$ for some $\gamma(1)<\beta \le \gamma.$
Because $(x_n)_{n\in
M}$ is $1$-equivalent to a standard bases of $C_0(\omega^\gamma)$,
there are
infinitely many elements of maximal support $x_{m(i)}$, $i \in \mathbb
N,$,
$m(i) \in M,$ and
for each $i$ the elements $x_n$ with $n >m(i)$ and support contained
in the
support of $x_{m(i)}$ is $1$-equivalent to a standard basis of
$C_0(\omega^{\beta(i)})$ where $\beta(i)<\gamma$ and either $\sup_i
\beta(i)
=\gamma$ or $\beta(i)+1=\gamma$ for infinitely many $i$. The second
player in
game chooses some $i_1$ and $n_1=m(i_1)$ such that $\beta(i_1)
\ge \gamma(1)$
and $m(i_1) > l(0,1)=l(1)$. Let $(x_{n})_{n \in M(1)}$, $M(1) \subset M$
be the subsequence
of elements with support strictly contained in the support of $x_{n_1}.$

For the second turn of the game player
1 chooses an integer $l(2)>n_1$, and there are two
possibilities: either $p_2=q(0,2)$ or $p_2=q(1,1).$ If $p_2=q(0,2)$, then
$(y_{q(2,k)})_{k\in \mathbb N}$ is $1$-equivalent to a
standard basis of
$C_0(\omega^{\gamma(2)})$ for some $\gamma(2)<\beta \le \gamma$.
The second player chooses $n_2=m(i_2)>l(2)$ for some $i_2$ satisfying
$\beta(i_2)
\ge \gamma(2).$ Let $(x_{n})_{n \in M(2)}$, $M(2) \subset M$,
be the subsequence
of elements with support strictly contained in the support of $x_{n_2}.$
If $p_2=p(q(1),1)$ then game $1$ is started with $(x_{n})_{n \in M(1)}$
and
$(y_{q(1,k)})_{k\in \mathbb N}$ and the integer $l(2)=l(1,1) \ge n_1.$ The
second player in game $1$ chooses $n_2 \in M(2)$ with $n_2 >l(1,1)$
by the strategy from the
inductive hypothesis for $\alpha=\beta(i_1)$ and $\beta=\gamma(1)$.

Proceeding in this fashion at each turn $k$
either $p_k=q(0,j)$ for some $j$
and the
second player picks $n_k=m(i_j)>l(k)>n_{k-1}$ with
$\beta(i_j) \ge
\gamma(j)$ or $p_k=q(j,r)$ for some, $j,r$, and the $r$ turn of the game $j$ is played
with $l(k)=l(j,r)
>n_{k-1}$ to pick $n_k \in M(j).$ It is easy to see that the resulting
sequence $(x_{n_k})_{k\in \mathbb N}$ is $1$-equivalent to $(y_{p_n})_{n
\in
\mathbb N},$ and the closed span is contractively complemented. That
completes the proof of the induction step and consequently, the lemma.

\end{proof}

The subsequence $(x^\alpha_n)_{n=1}^{\infty}$ spans the subspace
$C_0(\omega^\alpha)$ of $C(\omega^\alpha)$ of
functions vanishing at $\omega^\alpha$.
It is not hard to see that the biorthogonal
functionals $({x^\alpha_n}^*)_n$ are differences of point-mass measures,
$\delta_{\gamma(n)}-\delta_{\gamma'(n)},$ where $\gamma(n)$ is the largest
ordinal in the support of $x^\alpha_n$ and $\gamma'(n)=\gamma(k),$ where
$k$ is the largest integer strictly smaller than $n$
such that the support of $x^\alpha_k$ contains the support of
$x^\alpha_n$.
(For the top level basis elements let $\gamma'(n)= \omega^\alpha.$)
It follows that for each $m$,
$[{x^\alpha_n}^*:n \le m]$ is the span of point-mass
measures and isometric to $l^m_1,$
and therefore $(x^\alpha_n)_{n=1}^{\infty}$ is a
shrinking basis. Recall that this means that for every functional
$f$ on $C_0(\omega^\alpha)$, $\|f|_{[x^\alpha_n]_{n\ge k}}\|\to 0$
as $k\to\infty$.

The proof of Lemma \ref{equiv-subseq} shows that the sequence
$(x^\alpha_n)_{n=1}^{\infty}$ has many subsequences that are
$1$-equivalent to $(x^\alpha_n)_{n=1}^{\infty}$. In fact we can say even
more.

\begin{proposition}\label{reproducible basis} A standard basis
$(x^\alpha_n)_{n=1}^{\infty}$ of $C_0(\omega^\alpha)$ is  two-player
subsequentially $1$-reproducible.  \end{proposition}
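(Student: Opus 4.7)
The plan combines two ingredients: (i) the subsequence selection game of Lemma~\ref{equiv-subseq}, which already produces $1$-equivalent copies of $(x^\alpha_n)$ via threshold-based index choices, and (ii) the fact, recorded just above the proposition, that $(x^\alpha_n)_{n=1}^\infty$ is a shrinking basis of $C_0(\omega^\alpha)$. Let $T\colon C_0(\omega^\alpha)\hookrightarrow Y$ be the given embedding and let $(P_m)$ denote the partial-sum projections of $Y$ onto $[y_1,\ldots,y_m]$, bounded by the basis constant of $(y_n)$. The key analytic input is that for each fixed $m$ and each infinite $M\subseteq\mathbb N$,
\[
\|P_m T x^\alpha_n\|\longrightarrow 0\quad\text{as}\ n\to\infty,\ n\in M.
\]
Indeed $P_m T x^\alpha_n=\sum_{j\le m}(y_j^*\circ T)(x^\alpha_n)\,y_j$, each $y_j^*\circ T$ lies in $C_0(\omega^\alpha)^*$, and the shrinking property forces $(y_j^*\circ T)(x^\alpha_n)\to 0$.

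I would then apply Lemma~\ref{equiv-subseq} with $\beta=\alpha$ and both sequences taken to be $(x^\alpha_n)_{n=1}^\infty$ itself to obtain an auxiliary strategy: given any increasing thresholds $l_1<l_2<\cdots$, it selects indices $n_k$ from a pool $M_k\subseteq\mathbb N$ (determined by prior moves) with $n_k>l_k$ such that $(x^\alpha_{n_k})$ is $1$-equivalent to $(x^\alpha_n)$. To define the second player's strategy in the outer game, set $N_0=0$ and proceed inductively. At turn $k$, upon receiving the first player's tail index $m_k$, set $m_k'=\max(m_k,N_{k-1}+1)$ and, using the analytic input, pick a threshold $l_k'>n_{k-1}$ so large that $\|P_{m_k'-1}Tx^\alpha_n\|<\epsilon_k/2$ for every $n\in M_k$ with $n\ge l_k'$. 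Feed $l_k'$ to the auxiliary strategy to obtain $n_k\in M_k$, then choose $N_k$ so large that $\|(I-P_{N_k})Tx^\alpha_{n_k}\|<\epsilon_k/2$, and set
\[
w_k=(P_{N_k}-P_{m_k'-1})\,Tx^\alpha_{n_k}.
\]
By construction $w_k$ is a finitely supported vector in $[y_j:m_k'\le j\le N_k]\subseteq[y_j]_{j\ge m_k}$ with $\max\supp w_k=N_k<m_{k+1}'\le\min\supp w_{k+1}$, so $(w_k)$ is a block sequence of $(y_n)$. Condition~(1) of the definition follows by the triangle inequality, while condition~(2) is exactly the conclusion delivered by the auxiliary strategy.

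The main obstacle I anticipate is reconciling the combinatorial threshold-based selection of Lemma~\ref{equiv-subseq}, which controls only the lower bound on $n_k$, with the analytic requirement that $w_k$ live in the prescribed tail $[y_j]_{j\ge m_k}$ with finite support strictly after $w_{k-1}$. Shrinking of $(x^\alpha_n)$ dissolves this tension: the auxiliary strategy accepts arbitrary thresholds, so inflating $l_k$ to the larger $l_k'$ dictated by the tail constraint and the error $\epsilon_k$ is free, after which the bounded partial-sum projections cleanly truncate $Tx^\alpha_{n_k}$ into a usable block.
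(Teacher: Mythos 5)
Your proof is correct, and it takes a genuinely different route from the paper's. The paper argues by a fresh transfinite induction on $\alpha$: for $\alpha=1$ it invokes a sliding-hump argument together with the $1$-subsymmetry of the $c_0$-basis, and at successor and limit stages it interweaves, level by level, the $c_0$-strategy on the top functions with the inductively obtained strategies on the subordinate copies of $C_0(\omega^\beta)$, $\beta<\alpha$, using Lemma~\ref{equiv-subseq} to repair the ordering at the end. You instead decouple the two demands of the game: the combinatorial task of extracting a $1$-equivalent subsequence against arbitrary adaptive thresholds is delegated wholesale to the game of Lemma~\ref{equiv-subseq} (applied with $\beta=\alpha$ and both bases equal to $(x^\alpha_n)$), while the analytic task of landing in prescribed tails of $(y_j)$ is handled uniformly by the shrinking property: since the basis is normalized and shrinking, each $y_j^*\circ T$ vanishes along the tail of $(x^\alpha_n)$, so $\|P_mTx^\alpha_n\|\to 0$ for fixed $m$ and the tail constraint merely inflates the threshold fed to the auxiliary game. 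This means the induction on $\alpha$ is performed exactly once (inside Lemma~\ref{equiv-subseq}) rather than redone, and it makes explicit the ``perturbation of a block basis'' step that the paper leaves to the reader at the base case. What the paper's interweaving buys in exchange is a template that scales directly to Proposition~\ref{calphaCSR}, where the second player must simultaneously track dual functionals and build a projection; your shortcut does not obviously survive that extra bookkeeping, but for the present statement it is cleaner. Two trivial points worth making explicit: $w_k\neq 0$ once $\epsilon_k<\|T^{-1}\|^{-1}$ (no loss of generality), and the threshold $l_k'$ must also exceed $n_{k-1}$ so that the auxiliary game's monotonicity requirement is met, which your construction already arranges.
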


\begin{proof}
For $\alpha=1$, $(x^\alpha_n)_{n=1}^{\infty}$ is the unit vector basis
of $c_0$. Thus the result is immediate by a sliding hump argument and
the fact that $c_0$-basis is $1$-subsymmetric (i.e., $1$-equivalent
to each of its subsequences).
Suppose that for all $\beta<\alpha,$ $(x^{\beta}_i)_i$ satisfies
the conclusion. If $\alpha=\beta+1$, then by
definition of the basis (see \ref{basis-def}) $(x^\alpha_i)_{i=1}^\infty$
consists of
the sequence of top functions $(x^\beta_{k,0})_{k=1}^\infty$ and
under each
a sequence $(x^{\beta}_{k,i})_{i=1}^\infty$ which is equivalent to a
standard basis of $C_0(\omega^\beta).$
By Lemma~\ref{equiv-subseq} any
subsequence of $(x^\alpha_i)_i$ which consist of an infinite subsequence
$(x^\beta_{k_j,0})_j$ of top functions and  $(x^{\beta}_{k_j,i})_i$
bases has a subsequence which is $1$-equivalent to $(x^\alpha_i)_i$.
Thus the second player has
a winning strategy in a block subspace game by alternating two winning
strategies in a prescribed basis order; by picking an appropriate
subsequence $(x^\beta_{k_j,0})_j$  of top functions using the strategy for
$\alpha=1$ and for each $k_j$ using the strategy from the inductive
assumption to choose a subsequence
of the corresponding $(x^{\beta}_{k_j,i})_i$ bases.

The strategy in the limit
ordinal case is similar since if $\alpha$ is the limit of the sequence
$(\gamma_k)$, then the sequence of top functions $(x^{\gamma_k}_0)_k$
is $1$-equivalent to $c_0$-basis and the basis elements below each are
equivalent to a standard basis of $C_0(\omega^{\alpha_k})$ for some
$\alpha_k<\alpha.$  Again employing the strategy for $\alpha=1$ on the top
functions and from
the inductive assumption, by following the
strategy for $\alpha_{k_j}$ for those top
functions chosen,
in the required order, and using Lemma~\ref{equiv-subseq}, the subsequence equivalent to the given standard
basis of $C_0(\omega^\alpha)$ can be produced.
\end{proof}

We will prove a much stronger statement taking advantage of {\em the
weak injectivity property} of $C(K)$ spaces due to Pelczynski \cite{P}:
If a separable Banach space contains a subspace $Y$ that is isomorphic to
a $C(K)$ space, then there is a further subspace $Z$ of $Y$ such that $Z$
is isomorphic to $C(K)$ and $Z$ is {\em complemented} in $X$. In the case
$K$ is countable, we will show that $Z$ can be realized as a subspace
spanned by a subsequence of the reproducible basis and in fact, the
second player has a winning strategy to produce such a subsequence. Thus
we introduce the following general terminology.

\begin{definition}
We say that a basis $(x_n)$ of a Banach space $X$ is
{\em two-player $D$-complementably subsequentially $C$-reproducible}
if the second player has a winning strategy in the following modified
two-player game. Let $(\epsilon_k)_{k \in \{0\}\cup \mathbb N}$
be a sequence of
positive numbers, $T$ be an isomorphic embedding of $X$
into a Banach space $Y$ with a basis $(y_i).$ Suppose that $l_0=0$,
$G_0=\emptyset,$ $l_0<l_1<\dots<l_{k-1}$, a finite index set $G_{k-1},$
positive numbers $(\delta_n^{k-1})_{n\in G_{k-1}},$ and
$(v_n^{k-1})_{n\in G_{k-1}}\subset Y^*$ have been chosen.
On the $k$th turn, the first player 
chooses an integer $i_k,$ a finite number of
elements $(u_n^k)_{n\in F_k}$ of 
$Y^*$, positive real
numbers $(\rho_n^k)_{n\in F_k}$ and also chooses a finite set of blocks
$(b_j)_{j\in J_k}\subset [y_i: l_{k-1}<i\le i_k]$ 
satisfying  $|v_l^{k-1}(b)|<\delta_l^{k-1}\|b\|$ for
all $b \in [b_j:j \in J_k]$ and $l \in G_{k-1}.$ The second player
chooses an integer $l_k>i_{k},$ a finite set $M_k \subset \mathbb N$ with
$M_{k-1}<M_k,$ i.e., $\max M_{k-1}<\min M_k,$ a finite set of blocks $(w_m^k)_{m\in M_k}
\subset [y_i:i_k<i \le l_k]$ with 
$\sum_{m\in M_k} \|T x_{m}-w_m^k\|<\epsilon_{k}$, 
and with $|u_n^k(x)|<\rho_n^k\|x\|$ for all $x \in [T x_{m}:m\in M_k]$
and
$n \in F_k,$
and chooses a finite number of
elements $(v_n^{k})_{n\in G_{k}}$ of
$Y^*$ and positive real
numbers $(\delta_n^{k})_{n\in G_{k}}$. The second player wins if for
$M= \bigcup_{k=1}^\infty M_k,$
\begin{enumerate}
\item{}
$(x_m)_{m\in M}$ is $C$-equivalent to the basis $(x_n)_{n=1}^\infty,$
\item{} there is a projection $P$ of norm at most
$\|T\|\|T^{-1}\|D$ from $[\{b_j:j \in J_k, k\in \mathbb N\}\cup \{T x_m:
m\in
M\}]$ onto $[T x_m: m\in M]$ with $\|Pz\| \le \epsilon_0 \|z\|$ for all
$z \in[b_j:j\in J_k,k\in \mathbb N].$
\end{enumerate}
\end{definition}

For $p$ in a countable compact space $K$, $\delta_p$ denotes the Dirac
evaluation functional on $C(K)$. (We use $\delta_k^n$ for small positive
numbers below but the indices make the distinction clear.) 
Note that $\delta_{\omega^\alpha}=0$
for the case $C_0(\omega^\alpha)$ below. The next lemma is a recasting of
the core of Pelczynski's result for the countable case \cite{P}.

\begin{lemma}\label{weakstarHB}
Let $\alpha<\omega_1$, and let $S$ be an isomorphic  embedding of
$C(\omega^\alpha)$ (respectively, of $C_0(\omega^\alpha)$) into a separable
Banach
space $Y$, let $(x_n^\alpha)$ be a standard basis of $C(\omega^\alpha)$
(respectively, of $C_0(\omega^\alpha)$), and let $(y^*_\rho)_{\rho \le
\omega^\alpha}\subset 2\|(S^*)^{-1}\|B_{Y^*}$
satisfy $S^*y^*_\rho=\delta_\rho$ for all $\rho \le \omega^\alpha$. Then
there is a compact subset $\Gamma$ of $[1,\omega^\alpha]$ homeomorphic to
$[1,\omega^\alpha]$ and a (weak${}^*$) compact subset $(w^*_\rho)_{\rho
\in \Gamma }$ of $Y^*$ such that

(i) $S^*w^*_\rho=\delta_\rho$ for
all $\rho \in \Gamma $, for each isolated point $\gamma$ of $\Gamma$
is an isolated point of $[1,\omega^\alpha]$, $w^*_\gamma=y^*_\gamma$, and
the map $\rho \rightarrow w^*_\rho$ is a homeomorphism, 

(ii) there is a subsequence of $(x_n^\alpha)$ equivalent to $(x_n^\alpha)$,
with contractively complemented closed linear span such that
the restriction to $\Gamma$ induces an isomorphism $R$ from the span
of the subsequence onto $C(\Gamma)$ (respectively, $C_0(\Gamma)$),
and $R^*\delta_\rho=S^*w^*_\rho$ for all $\rho \in \Gamma.$
\end{lemma}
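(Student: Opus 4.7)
The plan is to prove the lemma by transfinite induction on $\alpha<\omega_1$, constructing $\Gamma$, the map $\rho\mapsto w^*_\rho$, and the basis subsequence simultaneously. The fundamental tool is weak${}^*$ metrizability of the bounded set $2\|(S^*)^{-1}\|B_{Y^*}$ (since $Y$ is separable), which permits extraction of weak${}^*$ convergent subsequences of the prescribed $(y^*_\rho)$. Because $S^*$ is weak${}^*$--weak${}^*$ continuous, any weak${}^*$ limit of some $y^*_{\rho_k}$ with $\rho_k\to\rho$ automatically satisfies $S^*w^*=\delta_\rho$, producing the needed Hahn--Banach pre-image at non-isolated points of $\Gamma$ while leaving $w^*_\gamma=y^*_\gamma$ at isolated $\gamma$. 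Lemma~\ref{equiv-subseq} is used throughout to keep the basis subsequences $1$-equivalent to standard bases with contractively complemented span.

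For the base case $\alpha=1$, $(x^1_n)$ is essentially the unit vector basis of $c_0$; pass to a weak${}^*$ convergent subsequence $y^*_{n_k}\to w^*_\omega$ and set $\Gamma=\{n_k:k\in\mathbb N\}\cup\{\omega\}$, $w^*_{n_k}=y^*_{n_k}$, taking the basis subsequence $(x^1_{n_k})$ (adjoining $x^1_0$ in the $C$ case). For the successor step $\alpha=\beta+1$, decompose $[1,\omega^\alpha]$ as the disjoint union of $\{\omega^\alpha\}$ with the intervals $I_k=(\omega^\beta(k-1),\omega^\beta k]$; each $I_k$ is homeomorphic to $[1,\omega^\beta]$, and the basis elements $\{x^\beta_{k,n}:n\ge 0\}$ span a subspace isometric to $C(\omega^\beta)$. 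Apply the $C(\omega^\beta)$ case of the inductive hypothesis to $S$ restricted to each such branch, with the given $(y^*_\rho)_{\rho\in I_k}$, to obtain $\Gamma_k\subset I_k$, a continuous $\rho\mapsto w^*_\rho$ on $\Gamma_k$, and a compatible basis subsequence. Choose a subsequence $k_1<k_2<\cdots$ along which the top functionals $w^*_{\omega^\beta k_j}$ converge weak${}^*$ to some $w^*_{\omega^\alpha}$; weak${}^*$ continuity of $S^*$ then yields $S^*w^*_{\omega^\alpha}=\lim\delta_{\omega^\beta k_j}=\delta_{\omega^\alpha}$ (zero in the $C_0$ case). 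Set $\Gamma=\bigcup_j\Gamma_{k_j}\cup\{\omega^\alpha\}$ and combine the basis subsequences (adjoining $x^\alpha_0$ in the $C$ case). The limit ordinal step is entirely parallel, using the intervals $(\omega^{\gamma_{k-1}},\omega^{\gamma_k}]$ fixed when defining $(x^\alpha_n)$.

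The remaining conclusions follow directly from the construction: the chosen basis subsequence is $1$-equivalent to $(x^\alpha_n)$ with contractively complemented span (by Lemma~\ref{equiv-subseq} together with the preservation of subtree support structure under subsequencing); the restriction map $R$ to $\Gamma$ is an isomorphism onto $C(\Gamma)$ (resp.\ $C_0(\Gamma)$) because on each $\Gamma_{k_j}$ this holds by induction and the top function restricts compatibly; and $R^*\delta_\rho=S^*w^*_\rho$ holds by induction on each branch and by construction at $\omega^\alpha$. The map $\rho\mapsto w^*_\rho$ is continuous at each non-isolated $\rho\in\Gamma$ because the weak${}^*$ values of points of $\Gamma$ approaching $\rho$ were arranged to converge weak${}^*$ to $w^*_\rho$; compactness of $\Gamma$ then promotes this continuous injection to a homeomorphism onto its weak${}^*$ compact image. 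Isolated points of $\Gamma$ are precisely those inherited from the $\Gamma_{k_j}$, which by induction are isolated in $[1,\omega^\alpha]$ and carry $w^*_\gamma=y^*_\gamma$.

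The main technical obstacle lies in coordinating the diagonalizations across the transfinite recursion: after arranging weak${}^*$ convergence of the top functionals at each branching, one must verify joint continuity of $\rho\mapsto w^*_\rho$ on the final $\Gamma$, not merely on each branch. This requires careful interleaving of the inductive choices, and is most delicate at the limit ordinal stages, where infinitely many inductive instances of unbounded complexity must be handled simultaneously.
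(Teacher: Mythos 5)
Your overall strategy is the same as the paper's: induct on $\alpha$, split $[1,\omega^\alpha]$ into the supports of the top-level basis elements, apply the inductive hypothesis on each such interval, and diagonalize the top functionals to get $w^*_{\omega^\alpha}$. The base case and the role of weak${}^*$ metrizability are also as in the paper. But there is a genuine gap at the point you yourself flag as ``the main technical obstacle'' and then leave unresolved: your construction only arranges that the \emph{top} functionals $w^*_{\zeta_{k_j}}$ converge weak${}^*$ to $w^*_{\omega^\alpha}$. In the order topology, \emph{every} sequence $\rho_j\in\Gamma_{k_j}$ with $k_j\to\infty$ converges to $\omega^\alpha$, so for $(w^*_\rho)_{\rho\in\Gamma}$ to be weak${}^*$ compact and for $\rho\mapsto w^*_\rho$ to be continuous (hence a homeomorphism), you need $w^*_{\rho_j}\to w^*_{\omega^\alpha}$ for all such selections, not just the diagonal one. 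Your sentence ``the weak${}^*$ values of points of $\Gamma$ approaching $\rho$ were arranged to converge'' asserts exactly what was never arranged, and the same issue recurs at every limit point of every $\Gamma_{k_j}$ produced at earlier stages of the recursion.

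The paper closes this with a specific device you are missing: after passing to a subsequence so that $w^*_{\zeta_k}\to y^*$, choose for each $k$ a weak${}^*$ neighborhood $\mathcal N_k$ of $w^*_{\zeta_k}$ with the property that \emph{any} selection $y^*_k\in\mathcal N_k$ converges weak${}^*$ to $y^*$ (possible by metrizability of the weak${}^*$ topology on the bounded set $2\|(S^*)^{-1}\|B_{Y^*}$ --- take $\mathcal N_k$ inside a ball of radius tending to $0$ around $y^*$). Then replace each $\Gamma_k$ by a slightly smaller set, still homeomorphic to $[1,\gamma_k]$ and still carrying a suitable basis subsequence (this uses that $\zeta_k$ is the highest-order point of $\Gamma_k$ and that $\rho\mapsto w^*_\rho$ is already known to be a homeomorphism on $\Gamma_k$, so a clopen neighborhood of $\zeta_k$ in $\Gamma_k$ maps into $\mathcal N_k$), so that $w^*_\rho\in\mathcal N_k$ for all $\rho\in\Gamma_k$. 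With this shrinking, joint continuity at $\omega^\alpha$ is automatic, and the induction carries the property forward so that no separate ``interleaving across limit stages'' is needed. Without this step (or an equivalent), conclusion (i) --- in particular the weak${}^*$ compactness of $(w^*_\rho)_{\rho\in\Gamma}$ and the homeomorphism claim --- is not proved.
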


\begin{proof} The proof is by induction on $\alpha$. Suppose
$\alpha=1$. By
passing to a subsequence $(y^*_k)_{k\in K}$,
we have that $y^*_k \rightarrow y^*$ in the weak${}^*$ topology for some
$y^* \in Y^*.$ Let $\Gamma= K\cup \{\omega\}$ and put
$w^*_\omega=y^*$. The desired subsequence of the
basis $(\mathbbm{1}_{n})_{n\in \mathbb N}$ is $(\mathbbm{1}_{k})_{k\in
K}$ in the case
$C_0(\omega),$
and $x_0^1=\mathbbm{1}_{[1,\omega]}$
followed by $(\mathbbm{1}_{k})_{k\in K}$ for the case $C(\omega).$
All of the
requirements are clearly satisfied.

Now suppose that the lemma holds for all $\beta<\alpha.$ Because $(x_n^\alpha)_{n=1}^\infty$ is
a standard basis of $C_0(\omega^\alpha),$ the supports $A_k$, $k \in K,$ of the top level
elements in the basis, $(x_k^\alpha)_{k\in K}$, are intervals homeomorphic
to $[1,\gamma_k]$ where $(\gamma_k)$ is of one of two types. If
$\alpha=\beta+1$ for some ordinal $\beta$, $\gamma_k=\omega^\beta $, else
there exist $\alpha_k \nearrow \alpha$ and $\gamma_k=\omega^{\alpha_k}$
for all $k$. 
For each $k$, the elements of the basis which are supported strictly inside
$A_k$, $(x_{k,j}),$ are a standard basis of $C_0(\gamma_k)$. 
By induction, for each $k$ we can find subsets $\Gamma_k$ of
$A_k$ and $(w^*_\rho)_{\rho \in \Gamma_k}$ and
subsequences $(x_{k,j})_{j\in J_k}$ of the bases of $C_0(A_k)$
as in the conclusion. For each $k$ let $\zeta_k$ denote the highest order
point in $\Gamma_k$, i.e., $\Gamma_k^{(\xi)}=\{\zeta_k\}$ for some
$\xi<\omega_1.$ By passing to a subsequence  we may assume that
$(w^*_{\zeta_k})_{k \in K}$ converges to some $y^*.$
For each $k$ there is a w${}^*$- neighborhood
$\mathcal N_k$ of $w^*_{\zeta_k}$ such that if $y^*_k \in \mathcal
N_k$, then $(y^*_k)_{k \in K}$ converges to $y^*.$
For each
$k$ by replacing $\Gamma_k$  by a slightly smaller set and $(x_{k,j})_{j\in
J_k}$ by a corresponding
subsequence $(x_{k,j})_{j\in J'_k}$ we may assume
that $w^*_\rho\in \mathcal N_k$ for all $\rho \in \Gamma_k.$ 
Let $\Gamma=\{\omega^\alpha\}\cup \bigcup_{k\in K}
\Gamma_k$ and $w^*_{\omega^\alpha}=y^*.$ In the case of $C(\omega^\alpha)$
a subsequence of the basis equivalent to a standard basis
is $\mathbbm{1}_{[1,\omega^\alpha]}$
followed by the elements of the bases
$(x_k)_{k\in K},$ $(x_{k,j})_{j \in J_k}$, $k\in K$
in the required order. Some further thinning using Lemma
\ref{equiv-subseq} may be required to get a subsequence equivalent to
$(x_n)$. In the 
case $C_0(\omega^\alpha)$ we omit $\mathbbm{1}_{[1,\omega^\alpha]}$.
The required properties
are easily verified.  
\end{proof}


\begin{proposition} \label{calphaCSR} Let $\alpha$ be a countable
ordinal. A standard basis $(x^\alpha_n)_{n=1}^{\infty}$ of
$C_0(\omega^\alpha)$ is two-player $2$-complementably subsequentially
$1$-reproducible.
\end{proposition}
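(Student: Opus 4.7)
The plan is to run the basis-selection game of Proposition~\ref{reproducible basis} in tandem with a Pelczynski-style projection construction, with the second player additionally announcing dual functionals at each turn so that the resulting projection is nearly zero on the span of player~$1$'s blocks. The proof proceeds by transfinite induction on $\alpha$.

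Before the game begins, the second player privately applies Lemma~\ref{weakstarHB} to $T$ to fix a compactum $\Gamma\subseteq[1,\omega^\alpha]$ homeomorphic to $[1,\omega^\alpha]$, a weak${}^*$-compact family $(w^*_\rho)_{\rho\in\Gamma}\subset Y^*$ with $T^*w^*_\rho=\delta_\rho$ and $\|w^*_\rho\|\le 2\|(T^*)^{-1}\|$, and a subsequence $(x^\alpha_n)_{n\in N}$ with contractively complemented span. Weak${}^*$-continuity of $\rho\mapsto w^*_\rho$ yields a bounded operator $R:Y\to C(\Gamma)$, $(Ry)(\rho)=w^*_\rho(y)$, with $\|R\|\le 2\|(T^*)^{-1}\|$, and $RT$ restricted to $[x^\alpha_n]_{n\in N}$ is an isometric isomorphism onto $C_0(\Gamma)$ after the canonical identification. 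Hence the Pelczynski projection $P=T(RT)^{-1}R$ onto $[Tx^\alpha_n]_{n\in N}$ has norm at most $2\|T\|\|T^{-1}\|$. During the game the second player will further thin $(x^\alpha_n)_{n\in N}$ to some $M$, matched with $\Gamma'\subseteq\Gamma$ also homeomorphic to $[1,\omega^\alpha]$, using the strategy of Proposition~\ref{reproducible basis} to guarantee $1$-equivalence of $(x^\alpha_m)_{m\in M}$ with $(x^\alpha_n)$; the corresponding blocks $w^k_m$ are obtained by sliding-hump approximations of $Tx^\alpha_m$ inside $[y_i:i_k<i\le l_k]$, and player~$1$'s smallness constraints $|u^k_n(\cdot)|<\rho^k_n\|\cdot\|$ are met by pushing $m$ sufficiently far in $N$. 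The Pelczynski projection built from $\Gamma'$ retains the norm bound $2\|T\|\|T^{-1}\|$. What is new is arranging $\|Pb\|\le\epsilon_0\|b\|$ for $b$ in the span of player~$1$'s blocks, which reduces to forcing $\sup_{\rho\in\Gamma'}|w^*_\rho(b)|$ small.

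The functional announcements accomplish this by induction on $\alpha$. For $\alpha=1$ (the $c_0$ basis), at turn $k$ the second player announces $w^*_{\rho_{n_k}}-w^*_\omega$, and at turn $1$ also $w^*_\omega$, with small tolerances; continuity then forces $|w^*_{\rho_{n_k}}(b)|$ small for all $k$ and every subsequently chosen $b$. For the inductive step, decompose the basis into top-level functions at points $\zeta_k\in\Gamma$ with $\zeta_k\to\omega^\alpha$ and sub-bases rooted at each $\zeta_k$ that are $1$-equivalent to standard bases of $C_0(\omega^{\gamma_k})$ with $\gamma_k<\alpha$. Run a $c_0$-style top-level sub-game interleaved with the inductive sub-games for each chosen branch, coordinated in basis order via Lemma~\ref{equiv-subseq}; the announcements from each sub-game, together with the top-level announcements of $w^*_{\omega^\alpha}$ and $w^*_{\zeta_{k_j}}$, collectively cover a dense subset of $\Gamma'$ in its order topology. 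Weak${}^*$-continuity of $\rho\mapsto w^*_\rho$ and compactness of $\Gamma'$ then give uniform control of $\sup_{\rho\in\Gamma'}|w^*_\rho(b)|$, completing the verification of both winning conditions.

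The main obstacle is the scheduling of functional announcements: each $v^k_n$ only constrains blocks in turns $k+1$ and later, so the second player must announce $w^*_\zeta$ for limit ordinals $\zeta$ of $\Gamma'$ before the blocks whose control it provides are chosen. This is feasible precisely because the tree structure of $\Gamma\cong[1,\omega^\alpha]$ is pre-committed by Lemma~\ref{weakstarHB} at the outset, so the relevant limit points can be enumerated and distributed across the turns in advance; the inductive sub-games similarly announce their limit-point functionals early relative to the blocks they must govern. The factor $2$ in the complementation constant traces directly to the bound $\|w^*_\rho\|\le 2\|(T^*)^{-1}\|$ furnished by Lemma~\ref{weakstarHB}.
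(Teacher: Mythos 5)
Your proposal follows essentially the same route as the paper: induction on $\alpha$, Lemma~\ref{weakstarHB} to fix the weak${}^*$-continuous family $(w^*_\rho)$ and the Pelczynski-type projection (whence the constant $2$), the reproducibility game of Proposition~\ref{reproducible basis} for $1$-equivalence, and interleaved sub-games in which the second player both announces the functionals of already-chosen points and waits, via weak${}^*$-convergence to the limit-point functional, to control the not-yet-announced ones on earlier blocks. The paper replaces your ``dense subset plus compactness'' formulation by the explicit recursive estimate $|d^*_m(z)|<|d^*_0(z)|+\epsilon\|z\|$ with geometrically decaying tolerances -- necessary because the isolated points of $\Gamma'$ (hence most of any dense subset) are only determined during the game rather than in advance -- but this is bookkeeping rather than a genuinely different idea.
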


\begin{proof}
We will use induction on $\alpha$ with an inductive hypothesis which will
be described after the first step is proved.
Let $T$ be an isomorphism of
$C_0(\omega^\alpha)$ into a Banach space $Y$ with a bimonotone basis
$(y_n)_{n=1}^\infty$.

Assume $\alpha=1$ and let $(\epsilon_k)_{k=0}^\infty$ be a sequence of
positive numbers and $\epsilon'_0=\epsilon_0/(2\|T\|).$
For each $n \in \mathbb N,$ let $w_n^*$ be a Hahn-Banach
extension of $(T^{-1})^* \delta_n$ from $T\big(C_0(\omega^\alpha)\big)$
to $Y$. By passing to a subsequence and restricting to an isometric
subspace as given by Lemma~\ref{weakstarHB}, we may assume that $(w_n^*)$
converges weak${}^*$ to some $w^* \in Y^*.$ Note that $w^*$ is $0$
on $T(C_0(\omega^\alpha)).$ Let
$d_n^*=w_n^*-w^*$ for each $n$. For use in later steps of the induction we
will not make full use of the fact that $d_0^*= \text{w}^*\lim d_n^* =0.$
Instead we will use that it is small on certain elements. More precisely, we
put $\delta_0^0=\epsilon'_0/2^2$ and 
consider $d_0^*$ as an element chosen before the first player's first
turn which imposes the condition on that player's choice of $(b_j)_{j\in
J_1},$
$|v_0^0(b)|=|d_0^*(b)|< \delta_0^0
\|b\|=(\epsilon'_0/2^2)\|b\|$ for all $b\in[b_j:j\in J_1].$
Because $(x_n)$
is a two-player subsequentially $1$-reproducible basis, we can
use the strategy
for that game and impose additional requirements.
We now describe the second player's move in the game.

Suppose that at step $k$ the
first player has chosen an integer $i_k$, a finite set of blocks
$(b_j)_{j\in J_k}\subset [y_n:n \le i_k]$ of the basis $(y_n)$ satisfying 
$|v_l^{k-1}(b)|<\delta_l^{k-1}\|b\|$ for
all $b \in [b_j:j \in J_k],l \in G_{k-1},$ and has chosen elements $(u_n^k)_{n\in F_k}$ 
of $Y^*$ and positive real
numbers $(\rho_n^k)_{n \in F_k}$. Because $(d^*_n)$ converges
to $d_0^*,$ $|d_0^*(b)|<\delta_0^{l-1}\|b\|$ for all $b \in
[b_j:j\in J_l]$, $l \le k,$
and $(T x_n)$ is weakly null, there exists $m'_k$ so that if
$m \ge m'_k,$
$|d_m^*(b)|<|d_0^*(b)|+(\delta_0^{l-1}/2)\|b\|$ for all  $b \in [b_j:j\in J_l]$,
$l \le k,$
and $|u_n^k(T x_m)|<\rho_n^k\|T x_m\|$ for all $n \in F_k .$
By the strategy for the sequential reproducibility game
the second player chooses  
$m_k \ge m'_k$ and $l_k$ such that there
is some block $w_{m_k}\in [y_n:i_k<n\le l_k]$ 
such that 
$\|T x_{m_k} -w_{m_k}\|<\epsilon_k.$ The second player sets
$(v_l^{k})_{l\in G_k}=(d^*_{m_l})_{l=0}^k$  and
$(\delta_l^{k})_{l\in G_k}=(\epsilon'_0 /2^{2(k+1)})_{l=0}^k$ for the
conditions on the first player's next turn.

Because the second player uses the strategy from the reproducibility game
$(x_{m_k})_{k\in \mathbb N}$ is $1$-equivalent to $(x_n)$.
Let $P z =\sum_{k=1}^\infty
d^*_{m_k}(z) T x_{m_k}$ for all $z \in Y.$ Observe that for all $k,k',$ 
$d^*_{m_k}(T x_{m_{k'}})= \delta_{m_k}(\mathbbm{1}_{m_{k'}})$. Hence $P T x_{m_k} = T x_{m_k}.$
If $z= \sum_{k=1}^K \sum_{j \in J_k} a_{k,j} b_j^k$, then
\begin{multline*}
|d^*_{m_{k'}}(z)|= |d^*_{m_{k'}}(\sum_{k=1}^{k'} \sum_{j \in J_k} a_{k,j} b_j^k)+
d^*_{m_{k'}}(\sum_{k=k'+1}^K \sum_{j \in J_k} a_{k,j} b_j^k)| 
\\
\le\sum_{k=1}^{k'} \Big(|d_0^*(\sum_{j \in J_k}
a_{k,j}b_j^k)|+(\delta_0^{k-1}/2)\|\sum_{j \in J_k} a_{k,j}
b_j^k\|\Big)+\sum_{k=k'+1}^K
\delta^{k-1}_{0}\|\sum_{j \in J_k} a_{k,j} b_j^k\|
\\ \le |d_0^*(z)|+\sum_{k=1}^{k'} (\epsilon_0'/2^{2k})\|z\|+
\sum_{k=k'+1}^K
\epsilon'_0/2^{2k}\|z\|<
|d_0^*(z)|+(\epsilon'_0 )\|z\|.
\end{multline*}
This completes the first step of the induction.

For the induction hypothesis we
actually want more than the statement of the proposition. This is because
for $\alpha>1$, the projection
formula (from the weak injectivity property of Pelczynski) is more
involved. Namely, we
require that
we are able to choose pairs of elements, $x_m$ from the basis and $d^*_m \in
Y^*,$ so that $T^*
d^*_m=\delta_{\gamma(m)}-\delta_{\omega^\alpha}=\delta_{\gamma(m)}$, the natural mapping $S:[x_m:m \in
M]\rightarrow C_0(\Gamma)$ where $\Gamma =\{\gamma(m):m\in M\}$ satisfying $(S
x_m)(\gamma(k)) = x_m(\gamma(k))$ is a surjective isometry, and the
projection $P$ is of the form $T E V$ where $V: Y \rightarrow C_0(\Gamma)$
is defined by $(V z) (\gamma(m))=d^*_m(z)$ for all $z\in Y,$
$E$ is the extension operator
which maps $C_0(\Gamma)$ into $C_0(\omega^\alpha)$ with range in $[x_m:m\in
M]$ with $S E= I$. Explicitly 
$$E f = \sum_{m\in M} (f(\gamma(m))-\sum_{\substack{\{m'\in
M: \\ m'\ne m, \\ x_{m'}\ge x_m\}}}f(\gamma(m')))x_m$$
or equivalently,
$$
E_f(\beta)=\begin{cases} f(\beta) \qquad \text{if } \beta \in \Gamma,\\
f(\gamma(m)) \qquad \text{if } x_m(\beta)=1, x_{m'}(\beta)=0 \text{ for
all }m'>m,\\
0 \qquad \text{else.}
\end{cases}
$$
Notice that the norm of the projection $P$ is at most $\|T\|\sup_{m\in M}
\|d_m^*\|.$

Now suppose that the following induction hypothesis
holds for all $\beta<\alpha$ and $\alpha
>1.$
 
For all sequences of positive numbers
$(\epsilon_k)_{k=0}^\infty$, maps $T:C_0(\omega^\beta)\rightarrow Y$, standard
bases $(x_n)_{n\in \mathbb N}$, with extensions $(d_n^*)_{n\in \{0\}\cup
\mathbb N}$ of\newline
$((T^{-1})^*\delta_{\gamma(n)})_{n\in \{0\}\cup \mathbb N},$ as in
Lemma~\ref{weakstarHB}, there is a winning strategy for the second player in
the complementably sequentially reproducible basis game to produce
$(x_m)_{m\in M}$ and $(d_m^*)_{m\in M}$ as in the game which also satisfy
these properties.
\begin{enumerate}
\item{} The elements of $M$ are chosen one by one, i.e., at each of the
second player's turns only one $m$ and $w_m$ are chosen with $\|T x_m -w_m\|$
as small as desired. (The required bound on the perturbation does not need
to be known until the step at which $m$ and $w_m$ are chosen.)
\item{} For each $k \in \{0\}\cup \mathbb N,$
$(v_n^k)_{n\in G_k}=(d_m^*)_{m\in M_k}$ where $M_k$ is the set with elements $\{0\}$
and the first $k$ elements of $M$.

\item{} The projection is of the form described above.
\item{} For all $z \in [b_j:j\in \cup_k J_k],$
$|d_m^*(z)|<|d_0^*(z)|+(\epsilon_0/(2\|T\|))\|z\|.$
\end{enumerate}
Here $d_0^*$ is an extension of $(T^{-1})^*\delta_{\omega^\beta}.$

Let $T:C_0(\omega^\alpha)\rightarrow Y$, $(x_n)_{n\in \mathbb N}$ be a
standard basis of $C_0(\omega^\alpha)$ with
extensions $(f_n^*)_{n\in \{0\}\cup
\mathbb N}$ of
$((T^{-1})^*\delta_{\gamma(n)})_{n\in \{0\}\cup \mathbb N}.$
First we apply Lemma~\ref{weakstarHB} to replace the original map $T$
by its restriction to a subspace isometric to $C_0(\omega^\alpha)$ spanned
by a subsequence of the basis equivalent to it such
that for each $\gamma \le \omega^\alpha$, we have $w^*_\gamma,$ a Hahn-Banach
extension of 
$(T^{-1})^*(\delta_\gamma),$ such that $\delta_\gamma \rightarrow w^*_\gamma$ is a
w${}^*$-homeomorphism. In this way we may assume that the original $T$ has these
properties.

The given basis of $C_0(\omega^\alpha)$ contains a
subsequence $(x_n)_{n\in N_0}$ such that for every $k \notin N_0$, there is
some $n \in N_0$ with $x_k \le x_n$ (pointwise) and if $n,m \in N_0$, then
$x_n \cdot x_m =0.$ The support of each $x_n$, $n \in N_0$, is homeomorphic
to $[1,\omega^{\zeta_n}]$ for some $\zeta_n <\alpha.$  To simplify the
situation choose a sequence $(n(i))_{i=1}^\infty$ in $N_0$ of distinct
elements such that
$\zeta_{n(i)} \nearrow \alpha$, if $\alpha$ is a limit ordinal,  and
$\zeta_{n(i)}=\beta$, if $\beta+1=\alpha$ for some ordinal $\beta.$ We may
discard all $x_n$ such that $x_n \cdot x_{n(i)} =0$ for all $i$. Because we
can apply Lemma~\ref{equiv-subseq} at the end of the argument to
obtain a subsequence equivalent to the original standard basis of
$C_0(\omega^\alpha)$, it will be sufficient to produce a subsequence of the
basis equivalent to {\it some} standard basis of $C_0(\omega^\alpha)$ that
satisfies all of the other requirements.

For each $i \in \mathbb N$ let $N_i=\{m: x_m \le x_{n(i)}, m\ne n(i)\}.$
$(x_m)_{m \in N_i}$ is a standard basis for $C_0(\omega^{\gamma_{n(i)}})$
for some $\gamma_{n(i)}<\alpha,$ and $(d_m^*)_{m \in \{n(i)\}\cup N_i}=(w_{\gamma(m)}^*-w_{\omega^\alpha}^*)_{m \in \{n(i)\}\cup N_i}$ is a corresponding sequence of
extensions of the inverse images of the Dirac measures. Thus for
each $i$ the induction hypothesis applies. $(x_m)_{m \in N_0}$ is a
standard basis
for $C_0(\omega)$ and $(d_m^*)_{m \in \{0\}\cup
N_0}=(w_{\gamma(m)}^*-w_{\omega^\alpha}^*)_{m \in \{0\}\cup N_0},$
where $\gamma(0)=\omega^\alpha,$ is a
corresponding sequence of
extensions of the inverse images of the Dirac measures, Therefore
there is a winning strategy as in the case $\beta=1$.
The remainder of the argument is interweaving all of the strategies to
produce the required strategy for $\alpha.$ Below at each step of the
induction there will be a finite
but increasing number of games employed so we will number the games as they
arise as game $0, 1, 2, \dots.$ We will include an extra subscript when
needed to indicate parameters associated with a particular game. For
example $\delta_{j,n}^{k}$ would be associated with game $j$. Parameters
for the combined game $\alpha$  will have a single subscript.

Let the sequence of positive numbers
$(\epsilon_k)_{k=0}^\infty$ be given and let $\delta_0^0=\epsilon_0/(4\|T\|)$.
We may assume that $(\epsilon_k)$ is non-increasing.
The first player chooses an integer $i_1,$ a finite set of blocks
$(b_j)_{j\in J_1}$ in $[y_k: k\le i_1],$ such that
$|d_0^*(b)|<\delta_0^0\|b\|$ for all $b \in [b_j:j\in J_1],$
$(u_n^1)_{n\in F_1}$ from the
dual of $Y$ and positive real
numbers $(\rho_n^1)_{n\in F_1}$. The second player views this as the first
move of game 0 and uses the strategy for
$\beta=1$ with $\epsilon_{0,0}=\epsilon_0/2^2$
to choose $l_{0,1}>i_1$, $m_{0,1} \in N_0$, and a
block $w_{m_{0,1}}\in[y_i:i_1<i\le l_{0,1}]$ such that
$\|Tx_{m_{0,1}}-w_{m_{0,1}}\|<\epsilon_{0,1}=\epsilon_1$
and $|u_n^1(Tx_{m_{0,1}})|<\rho_n^1\|Tx_{m_{0,1}}\|$ for all $n \in F_1.$
Then the second player chooses functionals $(v_{0,n}^1)_{n\in G_{0,1}}$ and
positive numbers $(\delta_{0,n}^1)_{n\in G_{0,1}}$ for the first player's next
turn of game 0. We set  $l_1=l_{0,1},$ $ m_1=m_{0,1},$ $ w_{m_1}=w_{m_{0,1}},$ $
(v_n^1)_{n\in G_1}=(v_{0,n}^1)_{n\in G_{0,1}},$
and $(\delta_n^1)_{n\in G_1}=(\delta_{0,n}^1)_{n\in G_{0,1}}$ for game
$\alpha.$

The first player chooses an integer $i_2>l_1,$
$(u_n^2)_{n\in F_2}$ from $Y^*,$ positive real
numbers $(\rho_n^2)_{n\in F_2},$
and blocks $(b_j)_{j\in J_2}\subset [y_i:l_1<i\le i_2]$ satisfying
$|v_n^{1}(b)|<\delta_n^1
\|b\|$ for all $b \in [b_j:j\in J_2],$ $n\in G_1.$
The second player uses the next move of the strategy for 
$\beta=1$ to choose $l_{0,2}>i_2$, $m_{0,2} \in N_0$, and a
block $w_{m_{0,2}}\in[y_i:i_2<i\le l_{0,2}]$ such that
$\|Tx_{m_{0,2}}-w_{m_{0,2}}\|<\epsilon_{0,2}=\epsilon_2$
and $|u_n^2(Tx_{m_{0,2}})|<\rho_n^2\|Tx_{m_{0,2}}\|$ for all $n \in F_2.$
Then the second player chooses functionals $(v_{0,n}^2)_{n\in G_{0,2}}$ and
positive numbers $(\delta_{0,n}^2)_{n\in G_{0,2}}$ for the first player's
next turn of game 0. We set  $l_2=l_{0,2},$ $ m_2=m_{0,2},$ $
w_{m_2}=w_{m_{0,2}},$ $
(v_n^2)_{n\in G_2}=(v_{0,n}^2)_{n\in G_{0,2}},$
and $(\delta_n^2)_{n\in G_2}=(\delta_{0,n}^2/2)_{n\in G_{0,2}}$ for game
$\alpha.$

For the third turn of the game $\alpha,$ the first player
chooses an integer $i_3>l_2,$
$(u_n^3)_{n\in F_3}$ from $Y^*,$ positive real
numbers $(\rho_n^3)_{n\in F_3},$
and blocks $(b_j)_{j\in J_3}\subset [y_i:l_2<i\le i_3]$ satisfying
$|v_n^{2}(b)|<\delta_n^2
\|b\|$ for all $b \in [b_j:j\in J_3],$ $n\in G_2.$ This time the second
player considers this his first move of game 1 with
$\epsilon_{1,0}=\epsilon_0/2^4$, the basis $(x_n)_{n\in
N'_1}$ where $N'_1=N_{m_{0,1}}$ and $(d_{1,m}^*)_{m=0}^\infty=(d_m^*)_{m\in \{m_{0,1}\}\cup
N'_1}=(w_m^*-w_{\omega^\alpha}^*)_{m\in \{m_{0,1}\}\cup
N'_1},$ ($d_{1,0}^*=d_{m_{0,1}}^*$) and uses the
strategy for $\beta_1=\zeta_{m_{0,1}}$ with conditions $i_{1,1}=i_3,$
$(u_{1,n}^1)_{n\in F_{1,1}}
=(u_n^3)_{n\in F_3}$,
$(\rho_{1,n}^1)_{n\in F_{1,1}}=(\rho_{n}^1)_{n\in F_3},$ and
$(b_j)_{j\in J_{1,1}}=(b_j)_{j\in J_1 \cup J_2 \cup J_3}.$
The second player chooses $l_{1,1}>i_3$, $m_{1,1} \in N'_1$, and a
block $w_{m_{1,1}}\in[y_i:i_3<i\le l_{1,1}]$ such that
$\|Tx_{m_{1,1}}-w_{m_{1,1}}\|<\epsilon_{1,1}=\epsilon_3$
and $|u_n^3(Tx_{m_{1,1}})|<\rho_n^3\|Tx_{m_{1,1}}\|$ for all $n \in F_3.$
The second player also chooses functionals $(v_{1,n}^1)_{n\in G_{1,1}}$ and
positive numbers $(\delta_{1,n}^1)_{n\in G_{1,1}}$ for the first player's
next
turn of game 1. We set  $l_3=l_{1,1},$ $ m_3=m_{1,1},$ $
w_{m_3}=w_{m_{1,1}},$ $
(v_n^3)_{n\in G_3}=(v_{1,n}^1)_{n\in G_{1,1}}\uplus (v_{0,n}^2)_{n\in
G_{0,2}}$
and $(\delta_n^3)_{n\in G_3}=(\delta_{1,n}^1)_{n\in G_{1,1}}\uplus
(\delta_{0,n}^2)_{n\in
G_{0,2}}$ for game
$\alpha,$ where $\uplus$ denotes concatenation of the finite sequences.

Now we briefly describe how to continue. The key point is to include in
the conditions for each move the
conditions imposed by all of the moves of the games
in progress. In order to write this more precisely we need to introduce
some notation. The moves are taken in Cantor order
$(0,1),(0,2),(1,1),(0,3),\dots $ for the elements of $(\{0\}\cup \mathbb N)
\times \mathbb N.$
The turn $k$ of the game $\alpha$ is considered by the
second player to be the move $\mv(k)$ of game $\gm(k)$ where
\begin{multline*}k=\tn(\gm(k),\mv(k))=1+\gm(k)+\sum_{r=0}^{\substack{\mv(k)+\\\gm(k)-1}}
r, \\ \gm(k) \ge 0,
\text{ and }\mv(k) \ge 1 .
\end{multline*}
The first player makes the move for turn 
$k$ by choosing an integer $i_k,$ a finite set of blocks
$(b_j)_{j\in J_k}$ in $[y_i: l_{k-1}<i\le i_k],$ such that
$|v_n^{k-1}(b)|<\delta_n^{k-1}\|b\|$ for all $b \in [b_j:j\in J_k],$ $n\in
G_{k-1},$
$(u_n^k)_{n\in F_k}$ from the
dual of $Y$ and positive real
numbers $(\rho_n^k)_{n\in F_k}.$ Let $k'=\gm(k)$ and $k''=\mv(k).$
The second
player considers this his move $k''$ of game $k'$ with
$\epsilon_{k',0}=\epsilon_0/(2^{2k'+1}\|T\|)$, the basis $(x_n)_{n\in
N'_{k'}}$ where $N'_{k'}=N_{m_{0,k'}}$ and $(d_{k',m}^*)_{m=0}^\infty=(d_m^*)_{m\in
\{m_{0,k'}\}\cup
N'_{k'}}=(w_m^*-w_{\omega^\alpha}^*)_{m\in \{m_{0,k'}\}\cup
N'_{k'}},$ ($d_{k',0}^*=d_{m_{0,k'}}^*$) and uses the
strategy for $\beta_{k'}=\zeta_{m_{0,k'}}$ with conditions $i_{k',k''}=i_k,$
$(u_{k',n}^k)_{n\in F_{k',k''}}
=(u_n^k)_{n\in F_k}$,
$(\rho_{k',n}^k)_{n\in F_{k',k''}}=(\rho_{n}^k)_{n\in F_k},$ and
$(b_j)_{j\in J_{k',k''}}=(b_j)_{j\in J_{k-k'-k''+1} \cup
J_{k-k'-k''+2}\cup \dots \cup J_k},$ if $k''>1,$ $(b_j)_{j\in
J_{k',k''}}=(b_j)_{j\in J_1 \cup J_2 \cup \dots \cup J_{k-1}},$ if $k''=1.$
The second player chooses $l_{k',k''}>i_{k',k''}=i_k$, $m_{k',k''} \in N'_{k'}$, and a
block $w_{m_{k',k''}}\in[y_i:i_k<i\le l_{k',k''}]$ such that
$\|Tx_{m_{k',k''}}-w_{m_{k',k''}}\|<\epsilon_{k',k''}=\epsilon_k$
and $|u_n^k(Tx_{m_{k',k''}})|<\rho_n^k\|Tx_{m_{k',k''}}\|$ for all $n \in F_k.$
The second player also chooses functionals $(v_{k',n}^{k''})_{n\in G_{k',k''}}$ and
positive numbers $(\delta_{k',n}^{k''})_{n\in G_{k',k''}}$ for the first player's
next
turn of game $k'$. For turn $k+1$ of game $\alpha$,
we set  $l_k=l_{k',k''},$ $ m_k=m_{k',k''},$ $
w_{m_k}=w_{m_{k',k''}},$ 
\begin{multline*}
(v_n^k)_{n\in G_k}=
\biguplus_{\substack{0\le \kappa'\le k',\\ \kappa''= k'+k''-\kappa'}}(v_{\kappa',n}^{\kappa''})_{n\in G_{\kappa',\kappa''}}
\biguplus \\
\biguplus_{\substack{k'<\kappa'
\le k'+k'',\\ \kappa''
=k'+k''-\kappa'-1}}(v_{\kappa',n}^{\kappa''})_{n\in G_{\kappa',\kappa''}}
\end{multline*}
and 
\begin{multline*}(\delta_n^k)_{n\in G_k}=
\biguplus_{\substack{0\le \kappa'\le k',\\ \kappa''=
k'+k''-\kappa'}}(\delta_{\kappa',n}^{\kappa''}/2^{k'-\kappa'+1})_{n\in G_{\kappa',\kappa''}}
\biguplus \\
\biguplus_{\substack{k'<\kappa'
\le k'+k'',\\ \kappa''
=k'+k''-\kappa'-1}}(\delta_{\kappa',n}^{\kappa''}/2^{k''+1+\kappa''})_{n\in G_{\kappa',\kappa''}}
\end{multline*}
Observe that $(v_n^k)_{n\in G_k}$ is in fact $(d_{m}^*)_{m\in \{0\}\cup
\{m_i:i \le k\}}.$
This completes turn $k$.

Let $M=\{m_k:k\in \mathbb N\}=\bigcup_{j=0}^\infty M'_j,$ where 
$M'_j=\{m_{j,i}:i\in \mathbb N\}$,  and for each $j \in \mathbb N \cup
\{0\}.$ 
It is easy to see that
because for each $j$, $(x_{m(j,i)})_{i \in \mathbb N}$ was constructed by
using the inductive hypothesis, the order we have used produces a
standard basis of $C_0(\omega^{\alpha})$ with one basis element chosen at
each turn. Also we chose $m_k$ such that
$\|T x_{m_k} -w_{m_k} \| <\epsilon_k$.
We define the projection $P$ onto $[T x_m: m \in M]$ by $T E V$ where $V$ is
the evaluation at $\{d^*_m:m \in M\}\cup \{0\},$ which is homeomorphic
to $[1,\omega^\alpha]$ and can be identified with
$\Gamma=\{\gamma(m):m\in M\}\cup \{\omega^\alpha\}$, and $E$ is the
extension map from $C_0(\Gamma)$ onto $[x_m:m\in M] \subset
C_0(\omega^\alpha).$ The norm of $P$ is at most $2\|T\|\|T^{-1}\|.$
Let $z \in [b_j: j\in J_k,
k\in \mathbb N].$ Fix $k$ and observe that $m_k$ was chosen by the strategy
for game $k'=\gm(k)$. Therefore for $\gm(k)>0,$ 
\begin{multline*}
z\in [b_j:j\in  J_{k',i},i \in
\mathbb N]\\
=[\{b_j:j \in J_{k-k'-i+l}, 1\le l \le k'+i, i =2,3, \dots\}
\\
\cup\{b_j:j\in J_l, 1 \le l \le k\}]
=[b_j:j\in J_i, i\in
\mathbb N],
\end{multline*}
\begin{multline*}
|d_{m_k}^*(z)|=|d_{m(k',\mv(k))}^*(z)|<|d_{m(0,k')}(z)|+(\epsilon_{k',0}/(2\|T\|))\|z\|\\
<|d_0^*(z)|+(\epsilon_{0,0}/(2\|T\|))\|z\|+(\epsilon_0/(2^{2k'+1}\|T\|)\|z\|\\
=|d_0^*(z)|+(\epsilon_0/(2\|T\|))(4^{-1}+4^{-k'})\|z\|.
\end{multline*}
If $k'=0$, we have the simpler estimate
\begin{multline*}
|d_{m_k}^*(z)|=|d_{m(0,\mv(k)}^*(z)|<|d_0^*(z)|+(\epsilon_{0,0}/(2\|T\|))\|z\|\\
=|d_0^*(z)|+(\epsilon_0/(2\|T\|))4^{-1}\|z\|.
\end{multline*}
This shows that the induction hypothesis is satisfied,
Notice that if we pass to a subsequence of $(x_m)_{m\in M}$ that is
equivalent to $(x_n)_{n\in \mathbb N}$ by using Lemma~\ref{equiv-subseq} we
have the same conditions satisfied.
It follows from the estimates above that for $z \in [b_j:j \in J_i, i\in
\mathbb N],$ $\|P z\|\le
\|T\|(\epsilon_0/(2\|T\|))(4^{-1}+4^{-k'})\|z\|<(\epsilon_0/2)\|z\|.$
Therefore a standard basis of $C_0(\omega^\alpha)$ is $2$-complementably
subsequentially $1$-reproducible.
\end{proof}

We can use the interweaving approach from the argument used in the proof
of the previous proposition to prove the following.

\begin{corollary}\label{interweave}
Suppose that for each $n$, $Z_n$
is a Banach space with a 2-player $D$-complementably subsequentially
$C$-reproducible basis $(z_{n,k})$. Let $Y$ be a
Banach space with a basis and
for each $n$ let $T_n:Z_n\rightarrow Y$ be an
isomorphism such that $\|T_n^{-1}\|\le 1$ and
$\sup \|T_n\| =K <\infty.$ Then for every $\epsilon>0$
for each $n$ there is
a subsequence $(z_{n,k})_{k\in
M_n}$ of $(z_{n,k})$ such that 
$(z_{n,k})_{k\in
M_n}$ is $C$-equivalent to the basis of $Z_n$, $(T_n z_{n,k})_{k\in M_n}$ is a
perturbation of
a block of
the basis of $Y$, disjointly supported from  the blocks for
$(T_m z_{m,k})$, all $m\ne
n$, and
there is a projection $P_n$, $\|P_n\| \le K D,$ from  $[T_m z_{m,k}: m\in \mathbb N, k \in M_m]$ onto $[T_n z_{n,k}:k\in M_n]$ such that
for any $z \in [T_m z_{m,k}: m\in \mathbb N, k \in M_m, m\ne n]$, $\|Pz\|<\epsilon
\|z\|.$ Moreover the sequences $(z_{n,k})_{k\in
M_n}$ are produced by a 2-player
game.
\end{corollary}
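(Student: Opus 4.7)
The plan is to adapt the interweaving technique from the proof of Proposition~\ref{calphaCSR}, now running countably many $D$-complementably subsequentially $C$-reproducible-basis games in parallel, one for each embedding $T_n : Z_n \to Y$. I would fix a bijection $k \mapsto (n(k), j(k))$ of $\mathbb{N}$ onto $\mathbb{N}\times\mathbb{N}$ so that every game $n$ is played infinitely often, and interpret turn $k$ of the global procedure as move $j(k)$ of the 2-player game for $(z_{n(k),i})$ under $T_{n(k)}$, using parameter $\epsilon_{n,0} = \epsilon/(K 2^n)$ to control that game's own projection bound.

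At turn $k$, the virtual first player of game $n(k)$ would supply: (i) an integer $i_k > l_{k-1}$; (ii) as its blocks $(b_j)$, all perturbed blocks $w_{m_{k'}}$ chosen at previous turns $k'<k$ with $n(k') \ne n(k)$ (which lie in $[y_i : i \le i_k]$ by construction); and (iii) as its functionals $(u^{j(k)})$ with sufficiently small thresholds $(\rho^{j(k)})$, all projection functionals $v$ accumulated so far from games $m \ne n(k)$. The winning strategy for game $n(k)$ then returns an index $m_{n(k),j(k)}$, a bound $l_k$, a perturbed block $w_{m_k} \in [y_i: i_k<i\le l_k]$ within $\epsilon_{n(k),j(k)}$ of $T_{n(k)} z_{n(k),m_{n(k),j(k)}}$, and new projection functionals and thresholds for game $n(k)$ that are added to the global record.

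The main consistency check is that the blocks in (ii) satisfy game $n(k)$'s current conditions $|v^{j(k)-1}(b)| < \delta^{j(k)-1}\|b\|$. This holds because each $w_{m_{k'}}$ in (ii) was produced by game $n(k')$'s winning strategy, which by step (iii) at turn $k'$ had already been told to keep every then-active functional of game $n(k)$ small. Choosing both the $\rho$'s of (iii) and the thresholds $\delta$ to decay geometrically in $k$, exactly as in Proposition~\ref{calphaCSR}, lets all cross-game conditions be met simultaneously.

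Setting $M_n = \{m_{n(k),j(k)} : n(k)=n\}$, the winning strategy for game $n$ produces $(z_{n,k})_{k \in M_n}$ that is $C$-equivalent to $(z_{n,k})$ together with a projection $P_n$ of norm at most $\|T_n\|\|T_n^{-1}\|D \le KD$ onto $[T_n z_{n,k}: k\in M_n]$, satisfying $\|P_n z\| \le \epsilon_{n,0}\|z\|$ on $[w_{m_{k'}}: n(k')\ne n]$. Since each $w_{m_{k'}}$ lies within $\epsilon_{n(k'),j(k')}$ of $T_{n(k')} z_{n(k'),m_{n(k'),j(k')}}$, absorbing these perturbations yields $\|P_n z\| < \epsilon\|z\|$ on $[T_m z_{m,l}: m\ne n, l\in M_m]$. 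Disjoint support of the block intervals is immediate from the growing sequence $(l_k)$. I expect the main obstacle to be the bookkeeping of these intertwined geometrically decreasing thresholds across infinitely many games running in parallel, but this is essentially the same bookkeeping that was handled in Proposition~\ref{calphaCSR}, just arranged across games indexed by $n$ rather than across successively generated subgames inside a single transfinite induction.
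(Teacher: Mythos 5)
Your proposal is correct and follows essentially the same route as the paper's (sketched) proof: a diagonal interweaving of the countably many reproducibility games in which each game's previously chosen perturbed blocks are fed to the other games as the obstacle blocks $(b_j)$ and each game's accumulated projection functionals are fed to the others as the $(u_n^k)$ with geometrically decaying thresholds. The only bookkeeping refinement in the paper is that at a non-initial move of game $n$ only the blocks produced since game $n$'s previous move are passed in (to respect the support window $[y_i: l_{k-1}<i\le i_k]$), which your scheme accommodates without difficulty.
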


\begin{proof} (Sketch) Let $\epsilon>0,$ and let
$\epsilon_k=\epsilon/(K2^{2k+2}),$ for all $k \in \mathbb N.$ Without loss
of generality we may assume that the basis of $Y$ is bi-monotone. Let game 0 be
the overall game with players 1 and 2. For each $n\in \mathbb N$ we will
have a game $n$ whose second player is following the strategy to produce
the required subsequence of $(z_{n,k})_{k \in \mathbb N}.$ As in the
previous proof we will use the first subscript on parameters
to denote the game.

 We begin with player 1 as the first player in game 0. Player 1 chooses
$i_{0,1},$ a finite sequence $(u_{0,n}^1)_{n\in F_{0,1}}\subset Y^*,$
positive
numbers $(\rho_{0,n}^1)_{n\in F_{0,1}},$ and a finite set of blocks
$(b_{0,j})_{j\in J_{0,1}}\subset [y_i: i\le i_{0,1}].$ Player 2 views
this
as the move of the first player in turn 1 of game 1 with
$\epsilon_{i,k}=\epsilon_1/2^{2(k+1)}, $ for $k \in \{0\}\cup\mathbb N,$ 
$i_{1,1}=i_{0,1},$ $(u_{1,n}^1)_{n\in F_{1,1}}=(u_{0,n}^1)_{n\in
F_{0,1}},$
$(\rho_{1,n}^1)_{n\in F_{1,1}}=(\rho_{0,n}^1)_{n\in F_{0,1}},$
and $(b_{1,j})_{j\in J_{1,1}}=(b_{0,j})_{j\in J_{0,1}}.$
Using the strategy to produce a subsequence of $(z_{1,k})_{k\in \mathbb N}$ 
player 2 chooses an integer $l_{1,1},$ a finite
set $M_{1,1}\subset \mathbb N,$ a finite set of blocks $(w_{1,m}^1)_{m\in
M_{1,1}}\subset [y_i:i_{1,1}<i\le l_{1,1}]$ with $\sum_{m\in M_{1,1}} \|T_1
z_{1,m}-w_{1,m}\|<\epsilon_1, $ and with $|u_{1,n}^1(x)|<\rho_{1,n}^1\|x\|$
for all $x \in [T_{1} x_m:m\in M_{1,1}],$ a finite sequence $(v_{1,n}^1)_{n\in
G_{1,1}}\subset Y^*,$ and positive numbers $(\delta_{1,n}^1)_{n\in
G_{1,1}}.$  Let $l_{0,1}=l_{1,1},$ $(v_{0,n}^1)_{n\in
G_{0,1}}=(v_{1,n}^1)_{n\in
G_{1,1}},$
and $(\delta_{0,n}^1)_{n\in
G_{0,1}}=(\delta_{1,n}^1)_{n\in
G_{1,1}}.$

For the second turn of game 0, Player 1 chooses
$i_{0,2},$ a finite sequence $(u_{0,n}^2)_{n\in F_{0,2}}\subset Y^*,$
positive
numbers $(\rho_{0,n}^2)_{n\in F_{0,2}},$ and a finite set of blocks
$(b_{0,j})_{j\in J_{0,2}}\subset [y_i: l_{0,1}<i\le i_{0,2}],$ such that
$|v_n^1(b)|<\delta_n^1\|b\|$ for all $b\in [b_{0,j}:j\in J_{0,2}],$ $n\in
G_{1,1}.$ Player 2 views
this
as the move of the first player in turn 2 of game 1, i.e.,
$i_{1,2}=i_{0,2},$ $(u_{1,n}^2)_{n\in F_{1,2}}=(u_{0,n}^2)_{n\in
F_{0,2}},$
$(\rho_{1,n}^2)_{n\in F_{1,2}}=(\rho_{0,n}^2/2)_{n\in F_{0,2}},$
and $(b_{1,j})_{j\in J_{1,2}}=(b_{0,j})_{j\in J_{0,2}}.$
Player 2 chooses an integer $l_{1,2},$ a finite
set $M_{1,2}\subset \mathbb N,$ a finite set of blocks $(w_{1,m}^2)_{m\in
M_{1,2}}\subset [y_i:i_{1,2}<i\le l_{1,2}]$ with $\sum_{m\in M_{1,2}} \|T_1
z_{1,m}-w_{1,m}\|<\epsilon_{1,2}, $ and with $|u_{1,n}^2(x)|<\rho_{1,n}^2\|x\|$
for all $x \in [T_{1} x_m:m\in M_{1,2}],$ a finite sequence
$(v_{1,n}^2)_{n\in
G_{1,2}}\subset Y^*,$ and positive numbers $(\delta_{1,n}^2)_{n\in
G_{1,2}}.$ Let $l_{0,2}=l_{1,2},$ $(v_{0,n}^2)_{n\in
G_{0,2}}=(v_{1,n}^2)_{n\in G_{1,2}},$
and $(\delta_{0,n}^2)_{n\in G_{0,2}}=(\delta_{1,n}^2/2)_{n\in G_{1,2}}.$

For the third turn of game 0 Player 1 chooses
$i_{0,3},$ a finite sequence $(u_{0,n}^3)_{n\in F_{0,3}}\subset Y^*,$
positive
numbers $(\rho_{0,n}^3)_{n\in F_{0,3}},$ and a finite set of blocks
$(b_{0,j})_{j\in J_{0,3}}\subset [y_i: l_{0,2}<i\le i_{0,3}],$ such that
$|v_n^2(b)|<\delta_n^2\|x\|$ for all $b \in [b_{0,j}:j\in J_{0,3}],$ $n \in
G_{1,2}.$

Player 2 now begins game 2 by setting
$\epsilon_{2,k}=\epsilon_2/2^{2(k+1)}, $ $k \in \{0\}\cup \mathbb N, $ 
$i_{2,1}=i_{1,3},$ $(b_j)_{j\in
J_{2,1}}=(b_j)_{j\in J_{1,1}} \uplus (w_{1,m})_{m\in M_{1,1}} \uplus 
(b_j)_{j\in J_{1,2}} \uplus (w_{1,m})_{m\in M_{1,2}} \uplus (b_{0,j})_{j\in
J_{0,3}},$ $(u_{2,n}^1)_{n\in
F_{2,1}}=(u_{0,n}^3)_{n\in
F_{0,3}} \uplus (v_{1,n}^2)_{n\in
G_{1,2}},$ and $(\rho_{2,n}^1)_{n\in
F_{2,1}}=(\rho_{0,n}^3)_{n\in F_{0,3}} \uplus (\delta_{1,n}^2/2)_{n\in
G_{1,2}}.$
Using the strategy for $(z_{2,n})_{n\in \mathbb N}$, Player 2 chooses an integer
$l_{2,1},$ a finite
set $M_{2,1}\subset \mathbb N,$ a finite set of blocks $(w_{2,m}^k)_{m\in
M_{2,1}}\subset [y_i:i\le l_{2,1}]$ with $\sum_{m\in M_{2,1}} \|T_2
z_{2,m}-w_{2,m}\|<\epsilon_{2,1}, $ and with $|u_{2,n}^1(x)|<\rho_{2,n}^1\|x\|$
for all $x \in [T_{2} z_{2,m}:m\in M_{2,1}],$ a finite sequence $(v_{2,n}^1)_{n\in
G_{2,1}}\subset Y^*,$ and positive numbers $(\delta_{2,n}^1)_{n\in
G_{2,1}}.$

Continuing in this way player 2 works through the next turns of the games in
progress and then starts the next new game. Player 2 adjusts the parameters
so that in each round of turns the estimates sum properly.
\end{proof}

By using a modification of the projection from the previous result we can
position the sequences $(T_m z_{m,k})_{k\in K_m},$ $m\in \mathbb N$, $m \ne n,$ in 
the kernel of a projection onto $[T_n z_{n,k}:k\in K_n].$ We believe that
the next lemma is well-known and is due to Pelczynski but we do not have a precise
reference.

\begin{lemma}\label{perturbP}  Suppose that $X$ is a Banach space,
$X_1$ and
$Z$ are subspaces of $X,$ $0<\epsilon<1,$ and $P$ is a projection from $x$
onto $X_1$ and $\|Pz\| \le \epsilon \|z\|,$ for all $z \in Z.$ Then there
is a projection $Q$ from $X_1+Z$ onto $X_1$ such that $Qz=0$ for all $z \in
Z.$
\end{lemma}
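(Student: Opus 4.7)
The plan is to exploit the smallness of $P$ on $Z$ to force $X_1 \cap Z = \{0\}$, so that every element of $X_1 + Z$ has a unique decomposition $x + z$ with $x \in X_1$, $z \in Z$. I would then define $Q$ simply as the algebraic projection $Q(x+z) = x$ and show it is bounded using the fact that $P$ already almost computes this decomposition.

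First I would observe that if $z \in X_1 \cap Z$, then since $P$ is a projection onto $X_1$ we have $Pz = z$, while the hypothesis gives $\|Pz\| \le \epsilon\|z\|$; combining these forces $\|z\| \le \epsilon\|z\|$, and since $\epsilon < 1$, $z = 0$. Consequently the sum $X_1 + Z$ is a (possibly non-topological) direct sum, and the linear map $Q: X_1 + Z \to X_1$ defined by $Q(x+z) = x$ is well-defined. It is immediate that $Q$ is the identity on $X_1$, that $Q$ vanishes on $Z$, and that $Q^2 = Q$, so once $Q$ is shown to be bounded it will be the desired projection.

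For boundedness, let $y = x + z$ with $x \in X_1$, $z \in Z$. Since $Px = x$, we have $Py = x + Pz$, so
\[
\|x\| = \|Py - Pz\| \le \|P\|\,\|y\| + \|Pz\| \le \|P\|\,\|y\| + \epsilon\|z\| \le \|P\|\,\|y\| + \epsilon(\|y\| + \|x\|).
\]
Rearranging gives
\[
(1-\epsilon)\|x\| \le (\|P\| + \epsilon)\|y\|,
\]
so $\|Qy\| = \|x\| \le \frac{\|P\| + \epsilon}{1 - \epsilon}\|y\|$. Thus $Q$ is bounded on $X_1 + Z$, and extends by continuity to its closure if needed.

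There is no real obstacle here; the only subtlety is recognizing that the norm bound $\|P|_Z\| \le \epsilon < 1$ automatically rules out any non-trivial intersection $X_1 \cap Z$, which is what allows the algebraic formula $Q(x+z) = x$ to make sense in the first place. Everything else is a one-line rearrangement.
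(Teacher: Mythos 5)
Your proof is correct, and it arrives at the same operator as the paper --- the projection onto $X_1$ with kernel $Z$ --- but by a different route. The paper sets $R=(I-P)|_Z$, notes that $\|Rz\|\ge(1-\epsilon)\|z\|$ makes $R$ an isomorphism of $Z$ onto $W=(I-P)Z$, and writes down the explicit formula $Q=I-R^{-1}(I-P)$; the identities $Qx=x$ for $x\in X_1$ and $Qz=0$ for $z\in Z$ are then immediate, and boundedness comes for free from $\|R^{-1}\|\le(1-\epsilon)^{-1}$. You instead observe that $\|P|_Z\|\le\epsilon<1$ forces $X_1\cap Z=\{0\}$, define $Q$ as the algebraic projection $x+z\mapsto x$, and prove boundedness by hand from $x=Py-Pz$. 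Both arguments are sound (note that your well-definedness step, $x-x'=z'-z\in X_1\cap Z$, is exactly where the directness of the sum is used, and the lemma does not require $X_1+Z$ to be closed, so the ``extend by continuity'' remark is not even needed). What each buys: your version is more elementary and yields the explicit and slightly sharper bound $\|Q\|\le(\|P\|+\epsilon)/(1-\epsilon)$, whereas the paper's closed-form expression $Q=I-R^{-1}(I-P)$ is the version its later arguments point back to when tracking norm constants (e.g.\ the estimate $D'=(1+\delta)^{-1}(2+D)$ in the proof of Proposition~\ref{c0sum}), so if you wanted to substitute your proof there you would want to record your bound explicitly.
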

\begin{proof}
Let $R=(I-P)|_Z$ and let $W=(I-P)Z.$  Because $\epsilon <1$, $\|Rz\| \ge
(1-\epsilon)\|z\|$ for all $z\in Z.$ Thus  $R$ is an isomorphism from $Z$
onto $W$. Let $Q=I-R^{-1}(I-P).$ $Qx=x$ for all $x \in X_1$ and $Qz=0$ for
all $z \in Z.$
\end{proof}

\section{An ordinal index for unconditional sums}
An essential technical tool that we need is an ordinal index that is closely related
the basis index introduced by Bourgain.  The new index, however, will
be defined for an unconditional sum of Banach spaces rather than for
one dimensional subspaces (i.e., a basis).

Recall that given a set $A$, a tree $\mathcal T$ on $A$
is a partially ordered family of finite tuples of elements
of $A$ such that if $(a_1,a_2,\dots,a_n) \in \mathcal T$ then
$(a_1,a_2,\dots,a_{n-1})\in \mathcal T$. The partial order is by extension
so that $(a_1,a_2,\dots,a_n) \le (b_1,b_2,\dots,b_k)$ iff $n\le k$ and
$a_j=b_j$ for $j=1,2,\dots, n$. A branch of $\mathcal T$ is a maximal
totally ordered subset. The tuples in the tree are called nodes, the
element $a_j$ of the node $(a_1,a_2,\dots,a_n)$ will be the $j$th entry,
and if there is a minimal node $r$ that is comparable to all nodes,
we will say that the tree is rooted at $r$.

A derivation of $\mathcal T$ is defined by deleting nodes with no
proper extensions. Let $\mathcal T^{(1)}=\{b\in \mathcal T :\exists c
\in \mathcal T, b\le c, b\ne c\}$. If $\mathcal T^{(\alpha)}$ has been
defined, let $\mathcal T^{(\alpha+1)}=(\mathcal T^{(\alpha)})^{(1)}$. If
$\beta$ is a limit ordinal, $\mathcal T^{(\beta)}=\bigcap_{\alpha<\beta}
\mathcal T^{(\alpha)}.$ If $\alpha$ is the smallest countable ordinal such that
$\mathcal T^{(\alpha)}=\emptyset$, then the order of the tree is set as
$o(\mathcal T)=\alpha$. Otherwise we put $o(\mathcal T)=\omega_1$.
If $A$ is a separable complete metric space and $\mathcal T$ on $A$ is
closed and
has order $\omega_1$, then $T$ has an infinite branch. 

Suppose that $A$ is a separable Banach space and $(y_i)$ is a normalized basis of a Banach space
$Y$. If $K <\infty$ and the nodes are
finite
normalized sequences $(a_i)_{i=1}^n$ in $A$ that are $K$-equivalent to
the initial segment $(y_i)_{i=1}^n,$
the index gives a
very useful tool for checking whether the Banach space $Y$
embeds into the Banach space $A$. Indeed,
for this Bourgain basis tree
one
only needs to check that the index is $\omega_1$. Unfortunately this tree
does not seem to be suitable for handling sums of spaces, so we need to
make some major modifications.

Let $Z$ be a Banach space with a $1$-unconditional basis $(z_n)$ and
for each $n$ let
$Y_n$ be a Banach space with norm $\|\cdot\|_n$. Then by $(\sum Y_n)_Z$
we denote the direct sum of $Y_n$'s with respect to $(z_n)$. That is,
the space is  the Banach space of sequences $(y_n)_{n=1}^\infty,$ $y_n
\in Y_n$ for all $n$, with finite norm $\|(y_n)\|_Z=\|\sum_{n=1}^\infty
\|y_n\|_n z_n\|_Z.$ Observe that this $Z$-sum  is well-behaved with respect
to uniformly bounded sequences of operators acting on the coordinate
spaces.

Let $Z$ and $Y_n$, $n \in \mathbb N,$ be as above and, in addition,
fix constants $C,D>0$ and a Banach space $X$.
Consider a tree $\mathcal T$
of tuples consisting of pairs of subspaces and isomorphisms
$$((X_1,T_1),(X_2,T_2),\dots,(X_k,T_k)),$$
where $X_j$ is a
subspace of $X$ and $T_j$ is an isomorphism from $X_j$ onto $Y_j$
such that
$\|T_j\|\le C,$ $\|T_j^{-1}\|\le 1,$ and for all $x_j \in X_j$, we have
$$\left\|\sum_{j=1}^k x_j\right\|\le \|(T_j x_j)\|_Z\le D
\left\|\sum_{j=1}^k x_j\right\|, \ 1\le j\le k.$$
We partially order $\mathcal T$ by extension and the order of the tree
is defined as before.
We call $\mathcal T$ a $(\sum Y_n)_Z$-tree in $X$ with constants $C,
D$, and the order of the tree is referred as $(\sum Y_n)_Z$ index.

Even if we assume that all of the spaces are separable, we cannot proceed as
before to
establish that trees with index $\omega_1$ have an infinite branch. We do
not know whether this is even true. However we are able to prove the
following result which is satisfactory for our purposes. The proof
actually
shows that there is an infinite branch which in a sense close to branches
of the given tree. 

\begin{theorem}\label{index_omega1} Let $Z$ be a Banach space with a normalized
1-uncondi\-tional basis $(z_n)$, and let $X$ and $Y_n, n\in \mathbb N$ 
be separable
Banach spaces. If $\mathcal T$ is a $(\sum Y_n)_Z$-tree in $X$ with index
$\omega_1$ and constants $C,D$, then  $X$ contains a subspace which is
$D$-isomorphic to $(\sum Y_n)_Z$.
\end{theorem}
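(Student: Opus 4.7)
The plan is to pass from the tree $\mathcal T$ (whose nodes carry infinite-dimensional data) to an auxiliary tree $\mathcal T'$ living on a countable set, transfer the ordinal index, invoke the classical fact that a well-founded tree on a countable set has countable rank to extract an infinite branch, and reassemble that branch into a $D$-isomorphic copy of $(\sum Y_n)_Z$ inside $X$.

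Fix countable dense sequences $(\xi_i)_{i\ge 1}\subset X$ and $(\eta_{n,j})_{j\ge 1}\subset Y_n$ for each $n$, and a positive null sequence $(\epsilon_\ell)$ with $\sum_\ell \epsilon_\ell$ taken as small as needed. Declare a tuple $((r_\ell,i_\ell,j_\ell))_{\ell=1}^l \in (\mathbb N^3)^{<\omega}$ to be a node of $\mathcal T'$ iff, letting $R=\max_\ell r_\ell$, there exist a node $((X_r,T_r))_{r=1}^R \in \mathcal T$ and vectors $\tilde\xi_\ell \in X_{r_\ell}$ with $T_{r_\ell}\tilde\xi_\ell = \eta_{r_\ell,j_\ell}$ and $\|\tilde\xi_\ell - \xi_{i_\ell}\| < \epsilon_\ell$; I also impose a bookkeeping clause forcing, for each $r\le R$, the appearance among the $j$-values of initial segments of $(\eta_{r,j})_j$ whose length grows with $|\{\ell:r_\ell=r\}|$ (this will supply density of the image of $\hat T_r$ below). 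By transfinite induction on $\alpha$ I would prove that if $\nu=((X_r,T_r))_{r=1}^R \in \mathcal T^{(\alpha)}$, then every bookkeeping-respecting finitization of $\nu$ lies in $(\mathcal T')^{(\alpha)}$: in the successor step, a proper extension $\nu'\in\mathcal T^{(\alpha)}$ of $\nu$ adjoins one new component $(X_{R+1},T_{R+1})$, and appending any valid triple $(R+1,i,j)$ to a finitization of $\nu$ yields a proper extension in $(\mathcal T')^{(\alpha)}$ by induction. Consequently $o(\mathcal T')=\omega_1$, so $\mathcal T'$ is ill-founded (well-founded trees on countable sets have countable rank) and admits an infinite branch.

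Grouping the branch's triples by $r$ yields sequences $(i_{r,s})_s,(j_{r,s})_s$; set $x_{r,s}:=\xi_{i_{r,s}}$, $\hat X_r:=\overline{\mathrm{span}}\{x_{r,s}:s\ge 1\}\subset X$, and define $\hat T_r$ on $\mathrm{span}\{x_{r,s}\}$ by $\hat T_r x_{r,s}:=\eta_{r,j_{r,s}}$, extended linearly. For any finite combination $\sum_{r,s}a_{r,s}x_{r,s}$, pick a depth $k$ along the branch containing all the triples involved, apply the tree inequalities to the witnessing vectors $\tilde\xi_\ell^{(k)}\in X_{r_\ell}^{(k)}$, and pass to the limit using $\|\tilde\xi_\ell^{(k)}-x_{r_\ell,s_\ell}\|<\epsilon_\ell$ and $\sum_\ell\epsilon_\ell$ arbitrarily small; this produces, for $x_r\in \hat X_r$,
$$\Bigl\|\sum_r x_r\Bigr\|\ \le\ \|(\hat T_r x_r)\|_Z\ \le\ D\Bigl\|\sum_r x_r\Bigr\|.$$
The lower estimate shows $\hat T_r$ is bounded below, the bookkeeping supplies dense range, so $\hat T_r$ extends to an isomorphism $\hat X_r\to Y_r$, and $\bigoplus_r \hat X_r\subset X$ is $D$-isomorphic to $(\sum Y_n)_Z$.

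The main obstacle is the order-transfer induction in Step~1: a given finitization corresponds to many possible witnessing nodes of $\mathcal T$, so one has to arrange the tolerances $(\epsilon_\ell)$ so that approximation slack never accumulates across the derivation at any ordinal level. Once this is done, the existence of an infinite branch in $\mathcal T'$ is the classical descriptive-set-theoretic statement, and the limiting argument yielding the exact constant $D$ rather than $D+\epsilon$ is routine provided $\sum_\ell\epsilon_\ell$ has been chosen small from the start.
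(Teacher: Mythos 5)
Your overall architecture --- replace $\mathcal T$ by an auxiliary tree $\mathcal T'$ on a countable set, transfer the index, extract an infinite branch from ``well-founded trees on countable sets have countable rank,'' and take limits --- is a legitimate alternative to the paper's argument (the paper never builds an auxiliary tree; it instead iteratively stabilizes the family of branches by a countable pigeonhole at each level, keeping the index at $\omega_1$, and then takes pointwise limits). The order-transfer induction and the branch extraction are fine. The genuine gap is in the final limiting step, which you call routine but which fails as described. Along your branch each target $\eta_{r,j}$ is approximated exactly once, at the turn $\ell$ where the triple $(r_\ell,i_\ell,j_\ell)$ appears, with the \emph{fixed} tolerance $\epsilon_\ell$; deeper witnessing nodes are only required to land in the same $\epsilon_\ell$-ball around $\xi_{i_\ell}$, so the witnesses $\tilde\xi^{(k)}_\ell$ need not converge as $k\to\infty$. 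Consequently, for $v=\sum_\ell a_\ell x_{r_\ell,s_\ell}$ the tree inequalities applied to the depth-$k$ witness give the two-sided estimate only up to an additive error $\sum_\ell |a_\ell|\epsilon_\ell$, which does not shrink with $k$ and is not controlled by $\|v\|$: since the $\eta_{n,j}$ are dense in $B_{Y_n}$, the vectors $x_{r,s}=\xi_{i_{r,s}}$ can be nearly linearly dependent, so $\sum|a_\ell|\epsilon_\ell$ can dwarf $\|\sum a_\ell x_\ell\|$. This kills not only the exact constant $D$ but also the well-definedness of ``$\hat T_r x_{r,s}:=\eta_{r,j_{r,s}}$, extended linearly.'' The missing idea is re-approximation: each pair $(r,j)$ must recur infinitely often along the branch with tolerances tending to $0$. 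Then the preimages $(T^{(k)}_r)^{-1}\eta_{r,j}$ under the depth-$k$ witnesses form a Cauchy sequence (any two are within the two relevant tolerances of the recorded $\xi_i$'s), their limits define $\hat T_r^{-1}$ as a genuine pointwise limit of the uniformly bounded linear maps $(T^{(k)}_r)^{-1}$ on a dense subset of $B_{Y_r}$, and all the norm identities pass to the limit exactly. This is precisely the mechanism in the paper, where the nets $\Upsilon_{n,m}$ are re-approximated at every level $k$ with tolerance $2^{-k}$.

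A second, smaller gap: your bookkeeping clause only forces the $j$-values for each occurring $r$ to fill out initial segments; nothing forces new values of $r$ to appear, so an infinite branch of $\mathcal T'$ may have $\sup_\ell r_\ell<\infty$ and yield only $(\sum_{n\in F}Y_n)_Z$ for a finite $F$. You need to hard-wire the sequence $(r_\ell,j_\ell)$ (e.g.\ the Cantor enumeration of $\mathbb N\times\mathbb N$, with each pair recurring infinitely often per the previous point), so that a node of $\mathcal T'$ is determined by the $i$-values alone and every infinite branch automatically covers all coordinates; the index transfer still goes through since grouping several $\mathcal T'$-turns per $\mathcal T$-extension only increases the order. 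With these two repairs your proof works and is a clean reorganization of the same analytic content.
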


\begin{proof} Let $W$ be a Banach space with a basis $(w_k)_{k=1}^\infty$
that contains $X.$
For each $j,k \in \mathbb N$ choose a subset
$\Omega_{j,k}$ of $[w_i: i\le j]\cap B_W,$ where $B_W$ denotes the unit
ball of $W$, such that
for all $w \in [w_i: i\le j]\cap B_W,$
there exists $w' \in \Omega_{j,k}$ such that
$\|w-w'\|<2^{-k}.$
For each $n,m\in \mathbb N$ let $\Upsilon_{n,m}$ be a finite
subset of $B_{Y_n}$ such that $\Upsilon_{n,m}\subsetneq \Upsilon_{n,m+1}$
for all $m$ and $\bigcup_{m=1}^\infty \Upsilon_{n,m}$ is dense in
$B_{Y_n}.$

Let the branches of $\mathcal T$ be indexed by $A$ so that for each
$\alpha
\in A,$ $((X_{\alpha,1},T_{\alpha,1}), \dots,
(X_{\alpha,k},T_{\alpha,k}))$, $1\le k< M_\alpha,$ $M_\alpha \le
\infty,$ is a branch and if $M_\alpha<\infty$, then
$((X_{\alpha,1},T_{\alpha,1}), \dots,
(X_{\alpha,M_\alpha-1},T_{\alpha,M_\alpha-1}))$ is a terminal node.
For each $(X_{\alpha,1},T_{\alpha,1})$, the initial node of the branch
$\alpha \in A$, find $n=n(\alpha,1,1)$ such that for each $y\in
\Upsilon_{1,1}$ there exists $w(y) \in \Omega_{n,2}$ such that
$\|w(y)-T_{\alpha,1}^{-1}y\|<2^{-1}$ and $w$ is one-to-one.
We may assume that if
$(X_{\alpha,1},T_{\alpha,1})= (X_{\gamma,1},T_{\gamma,1})$ then
$n(\alpha,1,1)=n(\gamma,1,1).$ For each $\alpha$  let
$W(\alpha,1,1)=\{w(y): y\in \Upsilon_{1,1}\}.$ For each $y \in
\Upsilon_{1,1}$ there may be more than one
possibility for $w(y)$
but we choose one to put in the set and use
the same
choice for all $\alpha$ with the same initial node. ($w(\alpha,1,1)(y)$ actually
depends on $\alpha$ and the index $(1,1)$ but we are suppressing this
in the notation for now.)
There are countably many possibilities for integers $n(\alpha,1,1),$
sets $W(\alpha,1,1)$, and bijections $w(\alpha,1,1):\Upsilon_{1,1}\rightarrow
W(\alpha,1,1),$
so there must be an integer $n(1,1)$, a bijection
$w(1,1)$ and a subset
$W(1,1)$ of $\Omega_{n(1,1),2}$ such that if
\begin{multline*}
A_1=\{\alpha:
n(\alpha,1,1)=n(1,1) \text{ and }W(\alpha,1,1)=W(1,1),\\
w(\alpha,1,1)(y)=w(1,1)(y) \text{ for all }y\in \Upsilon_{1,1}\},
\end{multline*}
the subtree $\mathcal T_1$ of
nodes with first entry $(X_{\alpha,1},T_{\alpha,1)})$ for $\alpha \in A_1$
has index $\omega_1.$
This completes the first step of an induction.

Next for each $((X_{\alpha,1},T_{\alpha,1}),(X_{\alpha,2},T_{\alpha,2})),$
which occurs as the second node of   a branch $\alpha \in A_1,$
we find integers $n(\alpha,1,2)$ and $n(\alpha,2,1)$ and subsets
$W(\alpha,2,1)$ of $\Omega_{n(\alpha,2,1),2}$ and $W(\alpha,1,2)$ of
$\Omega_{n(\alpha,1,2),3}$ such that for each $y_2\in
\Upsilon_{2,1}$, $y_1 \in \Upsilon_{1,2}$, there are  $w(y_2) \in
W(\alpha,2,1)$, $w(y_1) \in W(\alpha,1,2)$ such that
$\|T_{\alpha,2}^{-1}(y_2)-w(y_2)\|<2^{-1}$ and
$\|T_{\alpha,1}^{-1}(y_1)-w(y_1)\|<2^{-2}$. Here we again assume that
$w(\cdot)$
is a bijection on each set and that the integers $n(\alpha,1,2)$ and
$n(\alpha,2,1)$ and $w(\cdot)$ depend only on the second node in the
branch,
$((X_{\alpha,1},T_{\alpha,1}),(X_{\alpha,2},T_{\alpha,2}))$. As before
there are countably many choices for
the integers, bijections and finite sets, so there are integers $n(2,1)$,
$n(1,2)$, bijections $w(2,1)$ and $w(1,2)$, and
subsets $W(2,1)$ of $\Omega_{n(2,1),2}$ and $W(1,2)$ of
$\Omega_{n(1,2),3}$ such that if
\begin{multline*}
A_2=\Big\{\alpha \in
A_1:n(\alpha,1,2)=n(1,2), n(\alpha,2,1)=n(2,1), \\
W(\alpha,1,2)=W(1,2),
w(\alpha,1,2)(y)=w(1,2)(y)\text{ for all }y\in \Upsilon_{1,2},\\
W(\alpha,2,1)=W(2,1),w(\alpha,2,1)(y)=w(2,1)(y)\text{ for all }y\in
\Upsilon_{2,1}\Big\},
\end{multline*}
then the tree $\mathcal T_2$ of nodes
from the branches in $A_2$ has index $\omega_1.$

Continuing in this way we find a decreasing sequence of subsets
$(A_m)_{m=1}^\infty$ of $A,$ positive integers $n(i,j),$ and subsets
$W(i,j)$ of $\Omega_{n(i,j),j+1},$ $i,j \in \mathbb N,$
such that if $i+j\le m+1$, there is a bijection $w(i,j)$ from
$\Upsilon_{i,j}$ onto $W(i,j)$ with
$\|T_{\alpha,i}^{-1}(y)-w(i,j)(y)\|<2^{-j}$ for all $y \in
\Upsilon_{i,j}$.
Moreover, the subtree $\mathcal T_m$ of nodes from branches in
$A_m$ has order $\omega_1.$

Because $\Upsilon_{i,j} \subset \Upsilon_{i,k}$, all $k>j,$ for each
$i \in \mathbb N$, we can define
$$B_i=\Big\{x: \text{there exists }y \in
\Upsilon_{i,j},j \in \mathbb N,  \lim_k  w(i,k)(y)=x\Big\}.$$
(The limit is in the norm of $W$.)
Notice that for $x \in B_i$,
for each $\alpha\in A_m, j\le k \le m+1-i,$
if $x_{\alpha,i}=T_{\alpha,i}^{-1}(y),$
$$\|x_{\alpha,i}-w(i,k)(y)\|=\|T_{\alpha,i}^{-1}(y)-w(i,k)(y)\|<2^{-k}.$$
Thus for any choice $\alpha(m)\in A_m$, $m>i,$ $\lim_{m \rightarrow \infty} x_{\alpha(m),i} =x\in X$, 
and $\|x\| \le 1.$ Further
$\|T_{\alpha,i}^{-1}(y)\|\ge C^{-1} \|y\|$ so that $\|x\|\ge C^{-1}
\|y\|.$

If $y\in \Upsilon_{i,j}$ for some $j$, then for all $\alpha \in A_m$,
$\|T_{\alpha,i}^{-1}(y)-w(i,k)(y)\|<2^{-k},$ for $j\le k \le m+1-i.$ Thus
$\|w(i,k)(y)-w(i,k')(y)\|\le 2^{-\min(k,k')+1}$ if $j\le k' \le m+1-i$
also. Thus we may define a map $T^{-1}_i: \bigcup_j
\Upsilon_{i,j}\rightarrow B_i$
by $T^{-1}_i(y)=\lim_k w(i,k)(y).$ On its domain
$T^{-1}_i$ is the pointwise limit of
uniformly bounded linear maps, thus $T^{-1}_i$ continuously extends to
$B_{Y_i}$ as an affine map and to all of $Y_i$ by scaling. Because it is
bounded below, it is an isomorphism. Let $X_i$ be the range of $T^{-1}_i$.

It remains to verify that $[x:x\in X_i, i\in \mathbb N]$ is isomorphic to
$(\sum_i Y_i)_Z.$ This follows from the fact that for any finite sequence
$(x_i)_{i=1}^k$, $x_i \in B_{X_i}$ and $\epsilon>0$,  we can find $j$
such that for
each $i\le k$ there exists $y_i \in \Upsilon_{i,j}$ such that
$$\|T^{-1}_i(y_i)-x_i\|<\epsilon k^{-1}.$$
For $m$ sufficiently large if
$\alpha\in A_m$ then
$$\|T^{-1}_i(y_i)-T^{-1}_{\alpha,i}(y_i)\|<\epsilon k^{-1}$$ for $1\le i
\le k.$
Thus
$$\Big\|\sum_{i=1}^k x_i - \sum_{i=1}^k T^{-1}_{\alpha,i}(y_i)\Big\| <
2 \epsilon $$
and, because we have assumed $(z_i)$ is normalized,
\begin{multline} \label{uncdiff} \Big\|\sum_{i=1}^k \|T_ix_i\|z_i -
\sum_{i=1}^k \|y_i\|z_i\Big\|\le \sum_{i=1}^k \|T_ix_i-y_i\| \\
\le C\sum_{i=1}^k \|x_i -T^{-1}_i
y_i\|<2 C \epsilon.\end{multline}
We have that for $\alpha \in A_m,$  $m>j+k+1,$
$$D^{-1} \Big\|\sum_{i=1}^k \|y_i\|z_i\Big\| \le  \Big\|\sum_{i=1}^k
T^{-1}_\alpha y_i\Big\|
\le \Big\|\sum_{i=1}^k \|y_i\|z_i\Big\|$$ and consequently
\begin{multline*}
D^{-1}\Big\|\sum_{i=1}^k \|y_i\|z_i\Big\|-2\epsilon  \le
\Big\|\sum_{i=1}^k T^{-1}_\alpha y_i\Big\| -2 \epsilon  \le
\Big\|\sum_{i=1}^k x_i \Big\| \le \\
\Big\|\sum_{i=1}^k T^{-1}_\alpha y_i\Big\|+2\epsilon
 \le  \Big\|\sum_{i=1}^k \|y_i\|z_i\Big\|+2\epsilon.
\end{multline*}
Using the estimate (\ref{uncdiff})  We obtain
\begin{multline*}
D^{-1}\Big\|\sum_{i=1}^k \|T_i x_i\|z_i\Big\| -2 C \epsilon D^{-1}
-2 \epsilon \\
 \le \Big\|\sum_{i=1}^k x_i \Big\| \le  \Big\|\sum_{i=1}^k \|T_i
x_i\|z_i\Big\|+2 C \epsilon  +2 \epsilon.
\end{multline*}
Since $\epsilon>0$ is arbitrary, we have
$$
D^{-1}\Big\|\sum_{i=1}^k \|T_i x_i\|z_i\Big\|\le \Big\|\sum_{i=1}^k
x_i \Big\|
\le \Big\|\sum_{i=1}^k \|T_ix_i\|z_i\Big\|,$$ proving the result.
\end{proof}

\section{$C(K)$ subspaces of Elastic Spaces}

In the previous sections we have developed the tools that will allow us to
generalize the argument of Johnson and Odell and prove the following.

\begin{proposition} \label{c0sum} Let $K,C,D \ge 1$ be constants.
Suppose that $X$ is a separable $K$-elastic space
and $(Y_n)_{n=1}^\infty$ is a sequence of spaces with
two-player $D$ complementably $C$-reproducible bases that embed into
$X$. Then $X$ contains a
subspace isomorphic to $\big(\sum_{n=1}^\infty Y_n\big)_{c_0}.$
\end{proposition}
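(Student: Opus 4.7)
The plan is to invoke Theorem \ref{index_omega1} with $Z=c_0$: it suffices to construct a $(\sum Y_n)_{c_0}$-tree $\mathcal{T}$ in $X$ of order $\omega_1$, with constants depending only on $K$, $C$, $D$. Fix an ambient separable Banach space $W$ with a bimonotone basis that contains $X$ isometrically, so that Corollary \ref{interweave} can be invoked. The nodes of $\mathcal{T}$ will be tuples $((X_1,T_1),\ldots,(X_k,T_k))$ with each $T_j:X_j\to Y_j$ an isomorphism, $\|T_j^{-1}\|\le 1$, $\|T_j\|\le C'$, and satisfying both $c_0$-sum inequalities $\max_j\|T_j x_j\|\le\|\sum_j x_j\|\le D'\max_j\|T_j x_j\|$ with uniform constants $C',D'$.

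The basic extension step uses elasticity together with complementable reproducibility. Given any node, apply $K$-elasticity to obtain $T:Y_{k+1}\hookrightarrow X$ with $\|T\|\le K$ and $\|T^{-1}\|\le 1$. Apply Corollary \ref{interweave} inside $W$ to produce a copy $X_{k+1}\subset X$ of $Y_{k+1}$ together with a projection onto it of norm at most $KD$ whose restriction to $X_1+\cdots+X_k$ is arbitrarily small; Lemma \ref{perturbP} then upgrades this to an exact projection $P_{k+1}$ with $X_1+\cdots+X_k\subset\ker P_{k+1}$. The uniform projection bound yields the lower $c_0$-estimate $\max_j\|T_j x_j\|\lesssim\|\sum_j x_j\|$ at every node.

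The upper $c_0$-estimate $\|\sum_j x_j\|\le D'\max_j\|T_j x_j\|$ is a sup-type bound that projections alone cannot deliver when the number of summands is unbounded, and securing it uniformly is what forces the tree index to $\omega_1$. Mirroring the Johnson--Odell proof of Theorem \ref{c0}, I would proceed by transfinite induction on countable ordinals $\alpha$ to show that $\mathcal{T}^{(\alpha)}$ is nonempty while keeping the $c_0$-sum constant bounded by a fixed multiple of $KD$. At successor stages, $K$-elasticity transfers a configuration witnessing depth $\alpha$ from a suitable renorming of $X$ back into $X$ with embedding constant at most $K$, so $D'$ does not degrade with $\alpha$. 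At limit ordinals, an ordinal pigeonhole selects a cofinal subfamily of successful configurations, and the interweaving mechanism of Corollary \ref{interweave} merges them coherently into a single extended node.

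The main obstacle is maintaining uniform $c_0$-sum constants through the transfinite induction: elasticity only directly controls the embedding constant at each step, whereas the upper $c_0$-estimate is a condition that must hold coherently across all levels of the branch simultaneously. The game-theoretic formulation of Proposition \ref{calphaCSR} and Corollary \ref{interweave} is tailored precisely for this coherence: the parameters $(v_n^k),(\delta_n^k),(u_n^k),(\rho_n^k)$ encode the compatibility conditions inherited from earlier turns, and their summable perturbation tolerances keep the projections, and hence the $c_0$-sum constants, uniformly bounded through the transfinite construction. Once $\mathcal{T}$ is shown to have order $\omega_1$, Theorem \ref{index_omega1} produces a subspace of $X$ that is $D'$-isomorphic to $(\sum_n Y_n)_{c_0}$, completing the proof.
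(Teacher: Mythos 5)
Your outline matches the paper's proof in all its main structural elements: a transfinite induction building $\big(\sum Y_n\big)_{c_0}$-trees of every countable order with uniformly bounded constants, Corollary \ref{interweave} plus Lemma \ref{perturbP} to position the coordinate copies with exact projections, elasticity to return each stage to $X$ with constant $K$, and Theorem \ref{index_omega1} to convert an index-$\omega_1$ tree into an actual embedded copy of the sum. However, there is one genuine gap: the phrase ``a suitable renorming of $X$'' is carrying the entire weight of the upper $c_0$-estimate, and you never say what the renorming is. You correctly observe that projections alone cannot give $\|\sum_j x_j\|\le D'\max_j\|T_jx_j\|$ when the number of summands grows; but your text then asserts that elasticity keeps $D'$ from degrading without explaining where the upper estimate comes from in the first place. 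Elasticity only preserves whatever constants the renormed space already has, so the renorming must \emph{manufacture} the isometric $c_0$-sum structure before elasticity is invoked.

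Concretely, the paper renorms the span $V$ of the interweaved images (not $X$ itself) by
$\|y\|_i=\sup_m\|R_mT_m^{-1}P_my\|\vee\frac{\|y\|}{iKC}$,
where the $P_m$ are the coordinate projections and the $R_m$ are the basis equivalences back to $Y_m$. Because each $P_m$ kills the other coordinate subspaces, any sum $v=\sum_{j\in F}v_j$ with $|F|\le i$ and $v_j$ in the $j$th coordinate space satisfies $\|v\|_i=\max_j\|v_j\|_i$ \emph{exactly}: the supremum term dominates the $\frac{\|v\|}{iKC}$ term precisely because $|F|\le i$, which is why the renormed space only certifies nodes of length up to $i$ and the induction must pass through longer and longer finite stages, then $\omega$, then $\beta+\omega$, then limits of limits. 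This is the step at which the tree order increases while the constant stays at $K$ (up to perturbations), and it is also why the lower bound $\frac{\|y\|}{iKC}$ must shrink with $i$ --- an issue your sketch does not confront. A secondary inaccuracy: at limit ordinals the paper does not use an ordinal pigeonhole to merge configurations; it interweaves the reproducible bases of all the previously built spaces $V^{\beta_i}$ into disjointly supported blocks and renorms the union. The pigeonhole-type stabilization you describe belongs to the proof of Theorem \ref{index_omega1}, where the index-$\omega_1$ tree is refined to produce a genuine infinite branch.
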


\begin{proof}
We can assume that the elastic space $X$ is contained in a Banach space
$W$ with a bi-monotone basis $(w_n)$. We will show by an inductive
construction that for every $\epsilon>0$ and each countable limit ordinal
$\alpha$, $X$ contains a $\big(\sum_{n=1}^\infty Y_n\big)_{c_0}$-tree
of order at least
$\alpha$ with both constants $K(1+\epsilon)$. To do this we will construct
Banach spaces $V^\alpha$, $\alpha<\omega_1$ which are isomorphic to
subspaces of $X$ and contain
$\big(\sum_{n=N}^\infty Y_n\big)_{c_0}$-trees of order $\alpha$ for each
$N\ge 1$.

In order to avoid cluttering the arguments with multiplication by
various values of the form $(1+\delta)$ that are incurred from tiny
perturbations of block bases, we will suppress these throughout.

For each $n \in \mathbb N$, let $(y_{n,k})_{k=1}^\infty$ be a two-player
$D$
complementably $C$-reproducible basis for $Y_n$, and let $T_n$ be an
isomorphism from $Y_n$ into $X$ with $\|T_n\|\le K$ and $\|T_n^{-1}\|\le
1.$
Let $0<\delta<1.$
By interweaving the two-player games (See Corollary \ref{interweave}.)
we can find subsequences $(y_{n,k})_{k\in K_n}$ of
$(y_{n,k})_{k=1}^\infty$
for all $n\in \mathbb N,$ such that $(y_{n,k})_{k\in K_n}$
is equivalent to $(y_{n,k})_{k\in \mathbb N},$ for all $n\in \mathbb N,$
$(T_n y_{n,k})_{k\in K_n,n\in
\mathbb N}$
is (equivalent to) a block of $(w_j),$ and for each $m$ there is a
projection $P_m$ from $V=[T_n y_{n,k}:k\in K_n,n\in \mathbb N]$ onto
$[T_m y_{m,k}:k\in K_m]$ with $\|P_m y\|\le \delta \|y\|$ for all $y \in
[T_n y_{n,k}:k\in K_n,n\in \mathbb N, n\ne m].$ Let $R_n$ be the basis
equivalence from $(y_{n,k})_{k\in K_n}$ to $(y_{n,k})_{k=1}^\infty$.
By Lemma \ref{perturbP}, for each $m$ we may
assume that the projection $P_m$ is zero on $[T_n y_{n,k}:k\in K_n,n\in
\mathbb N, n\ne m]$.

For each $i \in \mathbb N$ define a norm $\|\cdot\|_i$ on $V$ by
$$\|y\|_i= \sup\Big\{\|R_m T_m^{-1}P_m y\|:m\in \mathbb N\Big\}\vee
\frac{\|y\|}{i K C }.$$
Let $B = \sup_i \|R_i T_i^{-1} P_i\|.$  Then $B\le C D'$ where
$D'=(1+\delta)^{-1}(2+D)$. (See the proof of Lemma~\ref{perturbP}.) Clearly
$\frac{\|y\|}{i K C} \le \|y\|_i \le B \|y\|$
for all $y \in V.$ Notice that if $y \in
[T_j y_{j,k}:k\in K_j]$ for some $j$ then $\|y\|_i=\|R_j T_j^{-1}y\|.$
Thus
$[T_j y_{j,k}:k\in K_j]$ with norm $\|\cdot\|_i$ is isometric to $Y_j.$
For each $i \in \mathbb N$ let $V^i$ be the space $V$ with norm $\|\cdot
\|_i.$

It is clear from the construction that if $F\subset\mathbb N$ with
$|F|\le i$, $v=\sum_{j\in F} v_j$ and $v_j \in [T_j y_{j,k}:k\in K_j]$,
then $\|v\|_i=\max \{\|v_j\|_i:j \in  F\}$. Indeed,
\begin{eqnarray*}
\|v\|_i&=&\sup\Big\{\|R_m T_m^{-1} P_m v\|:m\in \mathbb N\Big\}\vee
\frac{\|v\|_X}{i K C} \\ &\le& \max \Big\{\|R_m T_j^{-1} v_j\|:j \in  F\Big\}
\vee \frac{1}{i K C}\sum_{j\in F} \|v_j\|_X\\ &=& \max \{\|v_j\|_i:j \in  F\}.
\end{eqnarray*}
Conversely,
\begin{eqnarray*}
\|v\|_i&\ge& \sup\Big\{\|R_m T_m^{-1} P_m v\|:m\in \mathbb N\Big\}\ge \max
\Big\{\|R_m T_j^{-1} v_j\|:j \in  F\Big\}\\
&\ge& \max \{\|v_j\|_i:j \in  F\}.
\end{eqnarray*} 
This shows if $F=\{N,N+1,\dots,N+p\}$, $p<i,$ $(([T_j y_{j,k}:k\in
K_j],R_jT^{-1}_j))_{j\in F}$ is a node of a $\big(\sum_{n=N}^\infty
Y_n\big)_{c_0}$-tree.

Note that the definition of the norm on $V^i$ produces a hereditary
property. If for each $n \in \mathbb N,$ we pass to a
subsequence of $(y_{n,k})_{k\in K_n}$ which is equivalent, then the nodes
in $X$ formed from pairs with first coordinate the image under $T_n$ of
the closed span of the
subsequence, we again will have a node of a $\big(\sum_{n=N}^\infty
Y_n\big)_{c_0}$-tree.

Thus for all $N\ge 1$
there are branches of a $\big(\sum_{n=N}^\infty Y_n\big)_{c_0}$-tree
of length
$i$ in $V^i$. Since $X$ is $K$-elastic, for each $i \in \mathbb N$
there is an
isomorphism $S_i$ from $V^i$ into $X$ such that $\|v\|_i \le \|S_i v\|
\le K \|v\|_i$ for all $v \in V^i.$
Thus $X$ has the tree index (with constants $K$) at least
$\omega$.

We now extend this a little by observing that we can embed substantial
parts of the spaces $V^i$ simultaneously into $X$ as disjointly
supported blocks of $(w_j)$. Let $\delta >0$ and let
$(y_{i,n,k})_{k=1}^\infty$ be a two-player
complementably sequentially reproducible basis of
the subspace of $V^i$ spanned by $(Ty_{n,k})_{k\in K_n}$ (in norm
$\|\cdot\|_i$). By the definition
of $\|\cdot \|_i$ this subspace is isometric to $Y_n.$ By interweaving the
two player games for $(y_{i,n,k})_{k=1}^\infty$, $i,n \in \mathbb N,$
we can find subsequences $(y_{i,n,k})_{k\in K_{i,n}}$ of
$(y_{i,n,k})_{k=1}^\infty$
for all $i,n\in \mathbb N,$ such that $\{S_i y_{i,n,k}: k\in
K_{i,n},i,n\in \mathbb
N\}$
is (equivalent to) a block of $(w_j)$ in some order, and for each $j,m$
there is a
projection $P_{j,m}$ from $V^{\omega}:=[S_i y_{i,n,k}:k\in K_{i,n},i,n\in
\mathbb N]$ onto
$[S_j y_{j,m,k}:k\in K_{j,m}]$ with $\|P_{j,m} y\|\le \delta \|y\|$
for all $y \in
[S_i y_{i,n,k}:k\in K_{i,n},i,n\in \mathbb N, (i,n)\ne (j,m)].$ Let
$R_{i,n}$ be the basis
equivalence from $(y_{i,n,k})_{k\in K_{i,n}}$ to
$(y_{i,n,k})_{k=1}^\infty$.
By Lemma \ref{perturbP}, for
all $j,m$ we may
assume that the projection $P_{j,m}$ is zero on $[S_i y_{i,n,k}:k\in
K_n,i,n\in
\mathbb N, (i,n)\ne (j,m)]$.

Define a norm on $V^\omega$
by
$$\|y\|_\omega= \sup\Big\{\|R_{j,m} S_{j}^{-1}P_{j,m} y\|:j,m\in \mathbb
N\Big\}\vee
\frac{\|y\|}{K C },$$
where $B = \sup_{j,m} \|R_{j,m} S_{j}^{-1} P_{j,m}\|.$
As before $\frac{\|y\|}{K C } \le \|y\|_\omega \le B \|y\|$ for all $y \in
V^{\omega}.$ 
For each $i$, if $|F| \le i$ and for each $n\in F, $
$x_{i,n} \in S_{i} [y_{i,n,k}:k\in K_{i,n}],$
then
$\|\sum_{n \in F} x_{i,n}\|_\omega =\sup_{n\in F} \|R_{i,n}
S_{i}^{-1}x_{i,n}\|=\|\sum_{n \in F} R_{i,n}S_{i}^{-1}x_{i,n}\|_i.$
Consequently, the nodes of  length less than or equal to $i$ of the
$\big(\sum_{n=N}^\infty Y_n\big)_{c_0}$-tree of $V^i$ are nodes of the
$\big(\sum_{n=N}^\infty Y_n\big)_{c_0}$-tree in the corresponding subspace
of $V^\omega$. In particular, $V^\omega$ contains a
$\big(\sum_{n=N}^\infty
Y_n\big)_{c_0}$-tree of order $\omega$ for each $N\ge 1$.

Using the property that $X$ is $K$ elastic, we can find a subspace of $X$
$K$-isomorphic to $V^\omega.$ Observe that $V^\omega$ has the same
hereditary property with respect to equivalent subsequences of the
$(y_{i,n,k})$ that $V^i$ has. This completes the first part of the
induction.

Now assume that for all limit ordinals $\beta<\alpha$ we have constructed
spaces $V^\beta$
containing a $\big(\sum_{n=N}^\infty Y_n\big)_{c_0}$-tree of order $\beta$
for each $N \ge 1$ and that $V^\beta$ is isomorphic to a subspace of
$X$. We assume that this tree property is preserved under passing to equivalent
subsequences of the bases of each coordinate in each node.  Further we 
can assume that the tree has only countably many nodes and
hence that there are countably many subspaces $Y_{n,m}$, $n,m \in \mathbb
N$ such that $Y_{n,m}$ is isometric to $Y_n$ for each $m \in \mathbb N$
and that each
node of the tree is of the form
$$\big((Y_{N,m(N)},J_{N,m(N)}), \dots,
(Y_{N+k,m(N+k)},J_{N+k,m(N+k)})\big).$$
If we need to refer to these subspaces
for more than one $\beta$, then we will add a superscript $\beta$, e.g.,
$Y_{2,3}^\beta.$ A two-player complementably sequentially reproducible basis
for $Y_{n,m}$ will be denoted $(y_{n,m,k})_{k=1}^\infty.$

There are two cases to consider. If $\alpha$ is a limit ordinal of
the form
$\beta+\omega$, let $T^\omega$ be an isomorphism of $V^\omega$  into $X$
and let
$T^\beta$ be an isomorphism of $V^\beta$ into $X$ as given by the elastic
property. By interweaving the two-player games for the bases
$(y^\gamma_{n,m,k})_{k=1}^\infty$ for each $n,m \in \mathbb N,$  and
$\gamma=\omega, \beta,$
we can find subsequences $(y^\omega_{n,m,k})_{k\in K^\omega_{n,m}}$ of
$(y^\omega_{n,m,k})_{k=1}^\infty$,
$(y^\beta_{n,m,k})_{k\in K^\beta_{n,m}}$ of
$(y^\beta_{n,m,k})_{k=1}^\infty$,
for all $n,m\in \mathbb N,$ such that
$\big\{T^\gamma y^\gamma_{n,m,k}: k\in K^\gamma_{n,m},n,m\in
\mathbb N, \gamma=\omega,\beta\big\}$
is (equivalent to) a block basis of $(w_j)$ in some order, and for each
$\eta\in \{\omega,\beta\},i,j\in \mathbb N$ there is a
projection $P^\eta_{i,j}$ from
$V^{\beta+\omega}:=[T^\gamma y^\gamma_{n,m,k}:k\in K^\gamma_{n,m},n,m\in
\mathbb N, \gamma=\omega,\beta]$ onto
$[T^\eta y_{i,j,k}:k\in K^\eta_{i,j}]$ with $\|P^\eta_{i,j} y\|\le \delta
\|y\|$ for
all $y \in
[T^\gamma y^\gamma_{n,m,k}:k\in K^\gamma_{n,m},n,m\in \mathbb N,
\gamma\in\{\omega,\beta\}, (\gamma,n,m)\ne (\eta,i,j)].$
For each $\gamma,n,m,$ let
$R^\gamma_{n,m}$ be the basis
equivalence from $(y^\gamma_{n,m,k})_{k\in K^\gamma_{n,m}}$ to
$(y^\gamma_{n,m,k})_{k=1}^\infty$.
By Lemma \ref{perturbP}, for
each $i,j,\eta$ we may
assume that the projection $P^\eta_{i,j}$ is zero on $[T^\gamma
y^\gamma_{n,m,k}:k\in
K^\gamma_n,n,m\in
\mathbb N, \gamma\in \{\omega,\beta\},(\gamma,n,m)\ne (\eta,i,j)]$.

By the inductive assumption for each $N,M \in \mathbb N$, $V^\beta$
contains a
$\big(\sum_{n=M+N}^\infty Y_n\big)_{c_0}$-tree of order $\beta$ with node
entries drawn from the subspaces $Y^\beta_{n,m},$ $n,m \in \mathbb N.$
Because $(y^{\beta}_{n,m,k})_{k\in K_n}$ is $C$-equivalent to the given
basis of $Y^\beta_{n,m}$ and the stability of $\big(\sum_{n=M+N}^{M+N+l}
Y_n\big)_{c_0}$ under isomorphisms of the $Y_n$, $X$ contains a
$\big(\sum_{n=M+N}^\infty Y_n\big)_{c_0}$-tree of order at least $\beta$
with
constants $C K D$ and $C K$ and node entries from
$X^\beta_{n,m}=[T^\beta y^\beta_{n,m,k}:k\in K^\beta_{n,m}]$, $n,m \in
\mathbb N.$
Similarly, $X$ contains a
$\big(\sum_{n=M}^\infty Y_n\big)_{c_0}$-tree of order at least $\omega$
with
constants $C K D$ and $C K$ and the node entries from
$X^\omega_{n,m}=[T^\omega y^\omega_{n,m,k}:k\in K^\omega_{n,m}]$, $n,m
\in \mathbb N.$
In particular, we can find a branch containing a node of length $N$,
$$\big((X^\omega_{M,m(M)},S^\omega_{M,m(M)}), \dots,
(X^\omega_{M+N-1,m(M+N-1)},S^\omega_{M+N-1,m(M+N-1)})\big)$$
where
$S^\omega_{j,m(j)}=I^\omega_{j,m(j)} R^\omega_{j,m(j)}(T^\omega)^{-1}$
and $I^\omega_{j,m(j)}$ is the
basis isometry from $Y^\omega_{j,m(j)}$ onto $Y_j$. If
\begin{multline*}\big((X^\beta_{M+N,m(M+N)},S^\beta_{M+N,m(M+N)}), \dots,\\
(X^\beta_{M+N+i,m(M+N+i)},S^\beta_{M+N+i,m(M+N+i)})\big)
\end{multline*}
is a node from the
$\big(\sum_{n=M+N}^\infty
Y_n\big)_{c_0}$-tree where $S^\beta_{j,m(j)}$ is defined analogously,
then for all $x_j \in X^\omega_{j,m(j)}$, $M\le j\le M+N-1$, $x_j
\in X^\beta_{j,m(j)}$, $M+N\le j\le M+N+i,$ we have for $G\subseteq \{M,
\ldots, M+N+i\}$
$$\max_{j\in G, \gamma(j)=\omega, \beta}
\|x_j\|\|P^{\gamma(j)}_{j,m(j)}\|^{-1}\le \left\|\sum_{j\in G}
x_j\right\|\le 2C K D\max_{j\in G} \|x_j\|.$$

Thus for each $N$ we have a tree with order $\beta+N.$ 
We can improve the constants by renorming the closed linear
span
$V^{\beta+\omega}$ of the
spaces
$X^\gamma_{n,m}$, $\gamma\in\{\omega,\beta\},$ $n,m \in \mathbb N,$
by
$$ \|x\|_{\beta+\omega}=\sup\Big\{\|R^\gamma_{m,n}
(T^\gamma_{m,n})^{-1}P^\gamma_{m,n} x\|:\gamma=\omega,\beta; m,n\in
\mathbb N\Big\}\vee
\frac{\|x\|}{2 C K D
}.$$ 
The image of the index $\beta+\omega$ tree in
$V^{\beta+\omega}$ has constants $1$.
By the elastic property $V^{\beta+\omega}$ is
$K$-isomorphic to a subspace of $X$. Thus $X$ contains a
$\big(\sum_{n=M}^\infty Y_n\big)_{c_0}$-tree of order $\beta+\omega$
for all $M \in \mathbb N.$

For the remaining case we have that $\alpha$ is a increasing limit of a
sequence of limit ordinals $(\beta_i).$ This step is similar to the
case of
passing from $V^i$, $i \in \mathbb N$ to $V^\omega$ that was done at
the initial step of the induction.
By the induction hypothesis we have spaces $V^{\beta_i}$ for all $i \in
\mathbb N$ and for each $i$
a countable family of subspaces $Y^{\beta_i}_{n,m}$, $n,m
\in \mathbb N$ such that $Y^{\beta_i}_{n,m}$ is isometric to $Y_n$ and
there is a $\big(\sum_{n=N}^\infty Y_n\big)_{c_0}$-tree in $V^{\beta_i}$
with all
entries in the nodes taken from the subspaces $Y^{\beta_i}_{n,m}$.
For each $i$ let $T_i$ be an isomorphism of $V^{\beta_i}$ into $X$
given by
the elastic property.
Let $(y^{\beta_i}_{n,m,k})_{k=1}^\infty$ be a two-player
complementably sequentially reproducible basis
of $Y^{\beta_i}_{n,m}$ for each $i,n,m \in \mathbb N.$
By interweaving the
two player games for $(y^{\beta_i}_{n,m,k})_{k=1}^\infty$, $i,n,m \in
\mathbb N,$
we can find subsequences $(y^{\beta_i}_{n,m,k})_{k\in K^{\beta_i}_{n,m}}$
of
$(y^{\beta_i}_{n,m,k})_{k=1}^\infty$
for all $i,n,m \in \mathbb N,$ such that for all $i,n,m \in \mathbb N,$
$(T_i y^{\beta_i}_{n,m,k})_{k\in
K^{\beta_i}_{n,m}}$ 
is (equivalent to) a block basis of $(w_s)$ in some order, and for each
$j,l,p$ there is a
projection $P^{\beta_j}_{l,p}$ from $V^{\alpha}:=[(T_i
y^{\beta_i}_{n,m,k})_{k\in
K^{\beta_i}_{n,m}}: i,n,m \in \mathbb N]$
onto
$[T_j y^{\beta_j}_{l,p,k}:k\in K^{\beta_j}_{l,p}]$ with
$\|P^{\beta_j}_{l,p} y\|\le \delta \|y\|$ for
all $y \in
[(T_i y^{\beta_i}_{n,m,k})_{k\in
K^{\beta_i}_{n,m}}: i,n,m \in \mathbb N, (i,n,m) \ne (j,l,p)].$ Let
$R^{\beta_i}_{n,m}$ be the basis
equivalence from $(y^{\beta_i}_{n,m,k})_{k\in K^{\beta_i}_{n,m}}$ to
$(y^{\beta_i}_{n,m,k})_{k=1}^\infty$.
By Lemma \ref{perturbP}, for
each $j,l,p$ we may
assume that the projection $P^{\beta_j}_{l,p}$ is zero on $[(T_i
y^{\beta_i}_{n,m,k})_{k\in
K^{\beta_i}_{n,m}}: i,n,m \in \mathbb N, (i,n,m) \ne (j,l,p)].$

Define a norm on $V^\alpha$
by
$$\|y\|_\alpha= \sup\Big\{\|R^{\beta_i}_{n,m} T_{i}^{-1}P^{\beta_i}_{n,m}
y\|:i,n,m\in \mathbb
N\Big\}\vee
\frac{\|y\|}{K C
},$$
and let $B = \sup_{i,n,m} \|R^{\beta_i}_{n,m}
T_{i}^{-1}P^{\beta_i}_{n,m}\|.$
As before $\frac{\|y\|}{K C } \le \|y\|_\alpha \le B \|y\|$ for all $y \in
V^{\alpha}.$ 
The required properties are easily verified.

This completes the induction step. By Theorem \ref{index_omega1} $X$ contains a
subspace isomorphic to $(\sum Y_n)_{c_0}.$
\end{proof}

Now we deduce  Theorem \ref{mainthm} from Proposition \ref{c0sum}.
\begin{proof}
By Theorem \ref{c0} of Johnson and Odell we know that an elastic space
$X$ with
elastic constant $K$
contains a subspace $Y$ $K$-isomorphic to $c_0.$ $c_0$ is isomorphic to
$C_0(\omega^n)$ for each $n \in \mathbb N,$ and thus $X$ has subspaces
$Y_n$
$K$-isomorphic to  $C_0(\omega^n)$ for each $n \in \mathbb N.$ By
Proposition \ref{reproducible basis} and
Proposition  \ref{c0sum}, $X$ has a subspace isomorphic to
$\big(\sum_{n=1}^\infty C_0(\omega^n)\big)_{c_0}$ which is known
\cite{BP2} to be
isomorphic to $C_0(\omega^{\omega}).$

Inductively, we see that if $X$ contains a subspace $K$-isomorphic to
$C_0(\omega^{\omega^\alpha})$, then because $C_0(\omega^{\omega^\alpha})$
is isomorphic to $C_0(\omega^{\omega^\alpha n})$ for each $n \in \mathbb
N$ \cite{BP2}, by Proposition \ref{reproducible basis} and
Proposition  \ref{c0sum}, $X$ has a subspace $K$-isomorphic to
$\big(\sum_{n=1}^\infty C_0(\omega^{\omega^\alpha n})\big)$ which is
in turn
isomorphic to
$C_0(\omega^{\omega^{\alpha+1}})$. For a limit ordinal $\alpha= \lim
\beta_n$, a similar argument applied to subspaces of $X$
$K$-isomorphic to $C_0(\omega^{\omega^{\beta_n}})$ to obtain a subspace
isomorphic to
$\big(\sum_{n=1}^\infty C_0(\omega^{\omega^{\beta_n}})\big)_{c_0}$. The
latter space is isomorphic to
$C_0(\omega^{\omega^\alpha})$.

Hence $X$ contains subspaces $K$-isomorphic to
$C_0(\omega^{\omega^\alpha})$ for all countable ordinals $\alpha.$
By Theorem \ref{Bourgain}, $X$ contains a subspace isomorphic to
$C[0,1].$
\end{proof}



\begin{thebibliography}{Gul0}
\normalsize
\baselineskip=17pt
\bibitem {AGR}
S. Argyros, G. Godefroy, and H. Rosenthal, \emph { Descriptive set theory and
Banach spaces,} in Handbook of the Geometry of Banach Spaces, Vol. 2 (W. B.
Johnson and J. Lindenstrauss, eds.) North-Holland, Amsterdam (2003),
1007--1069.

\bibitem {BP2}  C. Bessaga, A. Pe\l czy\'nski, \emph { Spaces of
continuous
functions IV,} Studia Math. { {XIX}} (1960), 53--61.

\bibitem {Bour} J. Bourgain,  \emph { On separable Banach spaces, universal for all separable
reflexive spaces,} Proc. Amer. Math. Soc. 79 (1980), 241--246.

\bibitem {JohnOdell} W. B. Johnson and E. Odell, \emph {The diameter of the
isomorphism class of a Banach space,} Ann. of Math. (2) 162 (2005),
423--437. 

\bibitem {JuddOdell} R. Judd and E. Odell, \emph {Concerning Bourgain's 1-index of a Banach
space,}
Israel J. Math. 108 (1998), 145--171.


\bibitem{LP} J. Lindenstrauss and A. Pe\l czy\'nski. \emph { Contributions
to the theory of the classical Banach spaces,} J. Funct. Anal.,{8} (1971),
225--249.

\bibitem{MS} S. Mazurkiewicz,  W. Sierpi\'nski. \emph { Contribution \`a la
topologie des ensembles d\'enombrables,} Fund. Math, { 1} (1920), 17-27.

\bibitem{O} E. Odell, \emph{Ordinal indices in Banach spaces}, Extracta Mathematicae, Vol 19, Num. 1 (2004), 93-125.

\bibitem{P} A. Pe\l czy\'nski. \emph { On {$C(S)$}-subspaces of separable
{B}anach spaces,} Studia Math., {31} (1968), 513--522.

\bibitem{R} H. P. Rosenthal, \emph{The Banach spaces $C(K)$}, in Handbook of the Geometry of Banach Spaces, Vol. 2 (W. B.
Johnson and J. Lindenstrauss, eds.) North-Holland, Amsterdam (2003), 1547--1600.  



\end{thebibliography}
\end{document}